\documentclass[sn-mathphys,Numbered]{sn-jnl}


\usepackage{graphicx}%
\usepackage{multirow}%
\usepackage{amsmath,amssymb,amsfonts}%
\usepackage{amsthm}%
\usepackage{mathrsfs}%
\usepackage[title]{appendix}%
\usepackage{xcolor}%
\usepackage{textcomp}%
\usepackage{manyfoot}%
\usepackage{booktabs}%
\usepackage{algorithm}%
\usepackage{algorithmicx}%
\usepackage{algpseudocode}%
\usepackage{listings}%



\theoremstyle{thmstyleone}%
%

\theoremstyle{thmstyletwo}%

\theoremstyle{thmstylethree}%

\raggedbottom

\RequirePackage[OT1]{fontenc}
\usepackage{amsthm,amsmath}
\usepackage{amssymb}
\RequirePackage{graphicx}
\RequirePackage[numbers]{natbib}

\usepackage{amssymb}
\usepackage[T1]{fontenc}
\usepackage[latin1]{inputenc}
\usepackage[english]{babel}
\usepackage{amsfonts}
\usepackage{enumitem}
\usepackage{amsmath}
\usepackage{amsthm}
\usepackage{booktabs}
\usepackage[normalem]{ulem}
\usepackage{rotating}
\usepackage{amssymb}
\usepackage{tabularx,ragged2e,booktabs,caption}
\usepackage{multirow}
\usepackage{subcaption}
\usepackage{stmaryrd}
\usepackage{url}
\usepackage{hyperref}

\numberwithin{equation}{section}
\theoremstyle{plain}

\def\R{\mathbb{R}}

\def\N{\mathbb{N}}
\def\P{\mathbb{P}}
\def\E{\mathbb{E}}
\def\L{\mathbb{L}}

\def\R{\mathbb{R}}
\def\C{\mathbb{C}}
\def\Z{\mathbb{Z}}

\def\1{\mbox{I\hspace{-.6em}1}} 

\def\cov{\mbox{Cov}\,}
\def\var{\mbox{Var}\,}

\def\1{\mbox{\hspace{.2em}I\hspace{-.6em}1}} 

\def\limiteasn{\renewcommand{\arraystretch}{0.5}
\begin{array}[t]{c}\stackrel{a.s.}{\longrightarrow} \\
{\scriptstyle
n\rightarrow \infty}\end{array}\renewcommand{\arraystretch}{1}}

\def\limiten{\renewcommand{\arraystretch}{0.5}
\begin{array}[t]{c}\stackrel{}{\longrightarrow} \\
{\scriptstyle
n\rightarrow \infty}\end{array}\renewcommand{\arraystretch}{1}}

\def\limiteloin{\renewcommand{\arraystretch}{0.5}
\begin{array}[t]{c}\stackrel{{\cal L}}{\longrightarrow} \\
{\scriptstyle
n\rightarrow \infty}\end{array}\renewcommand{\arraystretch}{1}}

\def\limiteproban{\renewcommand{\arraystretch}{0.5}
\begin{array}[t]{c}\stackrel{{\P}}{\longrightarrow} \\
{\scriptstyle
n\rightarrow \infty}\end{array}\renewcommand{\arraystretch}{1}}

\def\limiteL1{\renewcommand{\arraystretch}{0.5}
\begin{array}[t]{c}\stackrel{{\L^1}}{\longrightarrow} \\
{\scriptstyle n\rightarrow+\infty}\end{array}\renewcommand{\arraystretch}{1}}

\def\equin{\renewcommand{\arraystretch}{0.5}
\begin{array}[t]{c}\stackrel{}{\sim} \\
{\scriptstyle
n\rightarrow \infty}\end{array}\renewcommand{\arraystretch}{1}}

\def\equix{\renewcommand{\arraystretch}{0.5}
\begin{array}[t]{c}\stackrel{}{\sim} \\
{\scriptstyle
x\rightarrow \infty}\end{array}\renewcommand{\arraystretch}{1}}

\theoremstyle{plain} 
\theoremstyle{remark}
\DeclareMathOperator*{\cart}{\times}

  \newtheorem{prop}{Proposition}[section]
  \newtheorem{cor}{Corollary}[section]
\newtheorem{theo}{Theorem}[section]
 \newtheorem{lem}{Lemma}[section]
 
  \newtheorem{Rem}{Remark}[section]

\begin{document}

\title[Quasi-Maximum Likelihood Estimation of long-memory linear processes]{Quasi-Maximum Likelihood Estimation of long-memory linear processes}


\author*[1]{\fnm{Jean-Marc} \sur{Bardet}}\email{bardet@univ-paris1.fr}

\author[1]{\fnm{Yves Gael} \sur{Tchabo MBienkeu}}\email{tch.yves@yahoo.fr}
\equalcont{These authors contributed equally to this work.}

\affil*[1]{\orgdiv{SAMM}, \orgname{University Panth\'eon-Sorbonne}, \orgaddress{\street{90 rue Tolbiac}, \city{Paris}, \postcode{75013}, \country{France}}}


\abstract{The purpose of this paper is to study the convergence of the quasi-maximum likelihood (QML) estimator for long memory linear processes. We first establish a correspondence between the long-memory linear process representation and the long-memory AR$(\infty)$ process representation. We then establish the almost sure consistency and asymptotic normality of the QML estimator. Numerical simulations illustrate the theoretical results and confirm the good performance of the estimator.}

\keywords{Long memory process, Semiparametric estimation, Linear process, Limit theorems}



\maketitle

\section{Introduction}\label{sec1}

Since Hurst's (1953) introduction of long-range dependent processes, much research has focused on estimating the long-range parameter, whether defined on the basis of the asymptotic power-law behavior of the correlogram at infinity or that of the spectral density at zero (see the monographs \cite{Be} and \cite{DOT} for more details). \\
Two estimation frameworks have been studied extensively. The first focused on the estimation of the long-memory parameter alone, but could be carried out in a semi-parametric framework, {\i.e.} if only the asymptotic behavior of the correlation or spectral density was specified. This led to the first methods proposed historically, such as those based on the R/S statistic, on quadratic variations, on the log-periodogram, or more recent methods such as wavelet or local Whittle (again, see \cite{DOT} for more details). \\
Here we are interested in a more parametric framework, and in estimating all the parameters of the process, not just the long memory parameter. The first notable results on the asymptotic behavior of such a parametric estimator were obtained in Fox and Taqqu (1986) (see \cite{FT}) in the special case of Gaussian long-memory processes, using the Whittle estimator. These results were extended to linear long-memory processes with a moment of order 4 by Giraitis and Surgailis (1990) (see \cite{gi-sur}). In both settings, the asymptotic normality of the estimator was proved, while non-central limit theorems were obtained for functions of Gaussian processes in \cite{GT} or for increments of the Rosenblatt process in \cite{BT}. The asymptotic normality of the maximum likelihood estimator for Gaussian time series was also obtained by Dahlhaus (1989, see \cite{Da}) using that of the Whittle estimator obtained in Fox and Taqqu (1986). \\
For weakly dependent time series, especially for conditionally heteroscedastic processes such as GARCH processes, the quasi-maximum likelihood (QML) estimator has become the benchmark for parametric estimation, providing very interesting convergence results where Whittle's estimator would not. This is true for GARCH or ARMA-GARCH processes (see \cite{BH} and \cite{FZ}), but also for many others such as ARCH($\infty$), AR$(\infty)$, APARCH processes, etc. (see \cite{BW}). We will also note convergence results for this modified estimator for long-memory squares processes, typically LARCH($\infty$) processes, see \cite{Beran}, or quadratic autoregressive conditional heteroscedastic processes, see \cite{DGS}. But for long-memory processes, such as those defined by a non-finite sum of their autocorrelations, to our knowledge only the paper by Boubacar Mainassara {\it et al.} (2021) (see \cite{BES}) has shown the normality of this QML estimator in the special case of a FARIMA$(p,d,q)$ process with weak white noise. \\
We therefore propose here to study the convergence of the Gaussian QML estimator in the general framework of long-memory one-sided linear processes. In such a framework, we begin by noting that the QML estimator is in fact a non-linear least-squares estimator. The key point of our approach is to prove that long-memory one-sided linear processes can be written in autoregressive form with respect to their past values, which we can call long-memory linear AR$(\infty)$. This is perfectly suited to the use of QMLE, since this estimator is obtained from the conditional expectation and variance of the process. We then show the almost sure convergence of QMLE for these long-memory AR$(\infty)$ processes, which generalizes a result obtained in \cite{BW} for weakly dependent AR$(\infty)$ processes. We also prove the asymptotic normality of this estimator, which provides an alternative to the asymptotic normality of Whittle's estimator obtained in \cite{gi-sur}. An advantage of QML estimation lies in the fact that, because it is applied to processes with an AR$(\infty)$ representation, the fact that the $(X_t)$ series is centered or not has no effect at all on the parameters of this AR$(\infty)$ representation, particularly on the estimation of the long memory parameter. \\
Finally, we performed simulations of two long-memory time series and examined the performance of the QMLE as a function of the size of the observed trajectories. This showed that the behavior of the QMLE is consistent with theory as the size of the trajectories increases, and provides a very accurate alternative to Whittle's estimator. An application on real data (average monthly temperatures in the northern hemisphere) is also presented. \\
~\\
This article is organized as follows: the section \ref{Affine} below presents the AR$(\infty)$ notation of an arbitrary long memory one-sided linear process, the section \ref{SecQMLE} is devoted to the presentation of the QMLE estimator and its asymptotic behavior, numerical applications are treated in the section \ref{simu}, while all proofs of the various results can be found in the section \ref{proofs}.

\

\section{Long-memory linear causal time series} \label{Affine}
Assume that $\varepsilon=(\varepsilon_t)_{t \in \mathbb{Z}}$ is a sequence of centered independent random variables such as $\mathbb{E}[\varepsilon_0^2]=1$ and 
$(a_i)_{i \in \mathbb{N}}$ is a sequence of real numbers such as: 
\begin{equation}\label{a2} 
a_{i} = L_a(i)\, i^{d-1}\quad \mbox{for $i\in \N^*$ and}\quad a_0>0,
\end{equation} 
where $d\in (0,1/2)$ and with $L_{a}(\cdot)$ a positive slow varying function satisfying 
$$
\mbox{for any $t>0$,}\quad 
\lim_{x\to \infty} \frac{L_a(xt)}{L_a(x)}=1.
$$
Now, define the causal linear process $(X_t)_{t \in \mathbb{Z}}$ by
\begin{equation}\label{linear} 
X_t = \sum_{i=0}^{\infty}{a_i \, \varepsilon_{t-i}}\quad\mbox{for any $t \in \mathbb{Z}$}.
\end{equation} 
Since $0<d<1/2$, it is well know that $(X_t)_{t\in \Z}$ is a second order stationary long-memory process. Indeed, its autocovariance is 
\begin{equation}\label{cov} 
r_X(k)=\cov(X_0,X_k)=\sum_{i=0}^{\infty}{a_i\, a_{i+k}}\sim C_d\, L_{a}^{2}(k)\, k^{2d-1}\quad \mbox{when $k \to \infty$},
\end{equation} 
where $C_d=\int_0^\infty (u+u^2)^{d-1}\,du$ (see for instance Wu {\it et al.}, 2010). \\
~\\
Then, it is always possible to provide a causal affine representation for $(X_t)_{t\in \Z}$, {\it i.e .} it is always possible to write $(X_t)_{t\in \Z}$ as an AR$(\infty)$ process:\\
\begin{prop}\label{prop1}
Let $(X_t)_{t\in \Z}$ be a causal linear process defined in \eqref{linear} where $(a_i)$ satisfies \eqref{a2}. Then, there exists a sequence of real number $(u_i)_{i\in \N^*}$ such as:
\begin{equation}\label{causal} 
X_t=a_0\, \varepsilon_t+\sum_{i=1}^\infty u_i \, X_{t-i}\quad \mbox{for any $t\in \Z$},
\end{equation}
where $(u_i)_{i\in \N^*}$ satisfies 
\begin{equation}\label{u}
\sum_{i=1}^\infty u_i=1\quad\mbox{and}\quad u_n\equin  \frac{a_0\, d}{\Gamma(d)\, \Gamma(1-d)}\, L^{-1}_a(n) \, n^{-1-d} =L_u(n)\, n^{-1-d}
\end{equation}
where $L_u$ is a slow varying function.
\end{prop}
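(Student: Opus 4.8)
The plan is to work in the language of generating functions (or equivalently, formal power series / the backshift operator). Writing $A(z)=\sum_{i\ge 0} a_i z^i$ for the transfer function of the linear filter in \eqref{linear}, the representation \eqref{linear} reads $X_t = A(B)\varepsilon_t$ where $B$ is the backshift operator. The AR$(\infty)$ representation \eqref{causal} asks for a series $U(z)=\sum_{i\ge 1} u_i z^i$ such that $a_0\varepsilon_t = (1-U(B))X_t$, i.e. $a_0\varepsilon_t = a_0 A(B)^{-1}X_t \cdot A(B)\varepsilon_t$ is consistent, which forces the identity
\begin{equation}\label{planid}
1-U(z) = \frac{a_0}{A(z)}, \qquad \mbox{equivalently} \qquad U(z)=1-\frac{a_0}{A(z)}.
\end{equation}
So the coefficients $(u_i)$ are defined by inverting the power series $A(z)$ and the entire proposition reduces to two things: first, that this inversion is legitimate (the series $U(z)$ converges on the closed unit disc and the resulting AR$(\infty)$ equation holds in $L^2$, which gives existence of the $(u_i)$), and second, the precise asymptotics \eqref{u} of its coefficients.

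First I would establish the normalization $\sum_{i\ge1}u_i=1$. By \eqref{planid} this is exactly the statement $U(1)=1-a_0/A(1)$, and since $d>0$ makes $\sum a_i$ diverge we have $A(1)=+\infty$, giving $U(1)=1$; I would make this rigorous by an Abel-summation / monotone-limit argument on the partial sums rather than by plugging in $z=1$ naively. For the existence and validity of \eqref{causal} itself, the key structural input is that $A(z)$ has no zeros on the closed unit disc except the boundary singularity at $z=1$ coming from the long-memory pole. Because $a_i\sim L_a(i)i^{d-1}$ with $0<d<1/2$, the function $A(z)$ behaves near $z=1$ like $(1-z)^{-d}$ up to a slowly varying factor (this is the standard Tauberian correspondence between power-law coefficients and algebraic branch-point singularities). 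Hence $1/A(z)$ behaves like $(1-z)^{d}$ near $z=1$, which is bounded and continuous on the closed disc, so $U(z)$ is well defined there and $(X_t)$ does satisfy the AR$(\infty)$ equation.

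The main obstacle, and the technical heart of the proof, is the coefficient asymptotics $u_n\sim \frac{a_0 d}{\Gamma(d)\Gamma(1-d)}L_a^{-1}(n)n^{-1-d}$. The strategy is to transfer the local behavior of $U(z)=1-a_0/A(z)$ near its dominant singularity $z=1$ into the asymptotics of its Taylor coefficients. From the singularity expansion $A(z)\sim c\,L_a\!\big((1-z)^{-1}\big)(1-z)^{-d}$ (with $c=\Gamma(d)$ coming from the precise Tauberian constant), one gets $a_0/A(z)\sim \frac{a_0}{\Gamma(d)}L_a^{-1}\!\big((1-z)^{-1}\big)(1-z)^{d}$, so that
\begin{equation}\label{planUexp}
U(z)=1-\frac{a_0}{A(z)} \sim 1-\frac{a_0}{\Gamma(d)}\,L_a^{-1}\!\big((1-z)^{-1}\big)\,(1-z)^{d}\qquad (z\to 1).
\end{equation}
Extracting coefficients of $(1-z)^{d}$ via the singularity-analysis / Tauberian theorem for slowly-varying coefficients (the coefficient of $z^n$ in $(1-z)^{d}$ is $\binom{n-d-1}{n}\sim \frac{n^{-d-1}}{\Gamma(-d)}$, and $\Gamma(-d)=-\Gamma(1-d)/d$) produces exactly the stated constant $\frac{a_0 d}{\Gamma(d)\Gamma(1-d)}$ and the exponent $-1-d$. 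The delicate points I expect to dwell on are the justification of a transfer theorem in the presence of the slowly varying factor $L_a$ (so that the $L_a^{-1}(n)$ appears correctly, using that $L_a^{-1}$ is again slowly varying and that $L_a((1-z)^{-1})$ controls the subdominant behavior), and verifying that there is no competing singularity on $|z|=1$ that would spoil the single-term asymptotic. Once these are in place, identifying $L_u(n)=\frac{a_0 d}{\Gamma(d)\Gamma(1-d)}L_a^{-1}(n)$ as slowly varying is immediate since the reciprocal of a slowly varying function is slowly varying.
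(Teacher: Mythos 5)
Your skeleton is the same as the paper's: invert the filter to get the generating-function identity $\big(\sum_{k\ge 0} u_k z^k\big)\big(\sum_{\ell\ge 0} a_\ell z^\ell\big)=-a_0$ (with $u_0=-1$), obtain $A(s)\sim \Gamma(d)\,L_a\big((1-s)^{-1}\big)(1-s)^{-d}$ as $s\to 1^-$ by Karamata's Tauberian theorem, and deduce $U(s)-1\sim -\frac{a_0}{\Gamma(d)}L_a^{-1}\big((1-s)^{-1}\big)(1-s)^{d}$. The genuine gap is at the step you yourself call the technical heart: extracting $u_n$ from this local expansion. The ``singularity-analysis / transfer theorem'' you invoke does not exist in this generality. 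Flajolet--Odlyzko transfer requires $U$ to continue analytically to a $\Delta$-domain beyond the closed unit disc, and here $A$ is given only by coefficients $a_i=L_a(i)\,i^{d-1}$ with an arbitrary slowly varying $L_a$, so no analytic continuation is available (nor is the absence of other boundary singularities a priori clear). A Tauberian argument cannot be applied to $U$ directly either: Karamata's theorem concerns generating functions that \emph{diverge} at $1^-$ with eventually one-signed coefficients, whereas the singular part of $U$ is a vanishing correction $O\big((1-s)^d\big)$ to the constant $1$, and the sign of $u_n$ is part of the conclusion, not a hypothesis; and even when Karamata applies it yields only partial-sum asymptotics, with descent to individual coefficients requiring the monotone density theorem and hence eventual monotonicity.

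The paper fills exactly this hole with three moves absent from your plan: (i) it passes to the tail sums $U_n=\sum_{k>n}u_k$ via $\sum_k u_k s^k=(s-1)\sum_k U_k s^k$, so that the generating function of $(U_n)$ diverges like $(1-s)^{d-1}$ and lands in Karamata's regime; (ii) it proves the required eventual positivity from the exact convolution identity between partial sums of $(u_k)$ and $(a_\ell)$ (see \eqref{prod3}), which forces $\sum_{k\le n}u_k<0$ and hence $U_n>0$ for large $n$; (iii) it applies the monotone density theorem of Bingham \emph{et al.} twice, first to get $U_n\sim \frac{a_0}{\Gamma(d)\Gamma(1-d)}L_a^{-1}(n)n^{-d}$ from $\sum_{k\le n}U_k$, then to descend to $u_n$. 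These sign and monotonicity verifications are the actual content of the proof, and deferring them as ``delicate points to dwell on'' supplies no mechanism for them. Two smaller points: your claim that $A$ has no zeros on $\overline{\D}$ because $A(z)\sim c\,(1-z)^{-d}$ near $z=1$ is a non sequitur --- local behavior at $z=1$ says nothing about zeros elsewhere (the paper takes invertibility of $S(B)$ for granted, but you present it as a consequence of your expansion); and for $\sum_{i\ge 1}u_i=1$, an Abel limit $U(s)\to 1$ does not by itself give convergence of the series without a Tauberian side condition --- the clean route is again the convolution identity, which gives $\sum_{k\le n}u_k\to 0$ directly since $\sum_{\ell\le n}a_\ell\to\infty$.
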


~\\
\begin{Rem}\label{Rem000}
Using \eqref{prod}, the reciprocal implication of Proposition \ref{prop1} is also true:  if $(X_t)$ satisfies the linear affine causal representation \eqref{causal} 
where $(u_i)_{i\in \N}$ satisfies \eqref{u}, then $(X_t)$ is a one-sided long-memory linear process satisfying  \eqref{linear} where $(a_i)$ satisfies \eqref{a2}.
\end{Rem}
~\\

\begin{Rem}
It is also known that $\displaystyle \Gamma(d)\, \Gamma(1-d)=\frac \pi {\sin(\pi \, d)}$ for any $d\in (0,1)$, and this  implies $\displaystyle u_n\equin  \frac{a_0\, d \, \sin(\pi \, d)}{\pi\, L_a(n)} \, n^{-1-d}$.\\
\end{Rem}
As a consequence, every long-memory one-sided linear process is a long-memory AR$(\infty)$ process with the special property that the sum of the autoregressive coefficients equals $1$. This is the key point for the use of quasi-maximum likelihood estimation in the following section. \\
~\\
{\bf Example of the FARIMA process:} Let $(X_t)_{t \in \mathbb{Z}}$ be a standard FARIMA($0,d,0$) with $d\in (0,1/2)$, which means $X=(I-B)^{-d} \varepsilon$, where $B$ is the usual backward linear operator on $\R^\Z$ and $I$ the identity operator. Then, using the power series of $(1-x)^{-d}$, it is known that
$$
X_t=\sum_{i=0}^{\infty}{a_{i}\varepsilon_{t-i}}\quad \mbox{with}\quad a_{i} = \frac{\Gamma(i+d)}{\Gamma(i+1)\Gamma(d)}\quad\mbox{for $t\in \Z$}.
$$
Using the Stirling expansion of the Gamma function, {\it i.e.} $\Gamma(x)\equix \sqrt{2\pi}\, e^{-x}x^{x-1/2}$, we obtain $a_{n}  \equin \frac{1}{\Gamma(d)}\, n^{d-1}$, which is \eqref{a2} with $L_a(n)\equin \frac{1}{\Gamma(d)}$. \\ 
Moreover, the decomposition $X=\varepsilon+(I_d- (I_d-B)^d)\, X$ implies:
$$
X_t=\varepsilon_t + d \,  \sum_{i=1}^\infty \frac { \Gamma(i-d)}{\Gamma(1-d) \, \Gamma(i+1)} \, X_{t-i}\quad\mbox{for $t\in \Z$}.
$$
The expansion $\displaystyle \frac { \Gamma(n-d)}{\Gamma(n+1)}\equin  n^{-1-d}$ provides $X_t=\varepsilon_t + \sum_{n=1}^\infty u_n\, X_{t-n}$ with  $u_n\equin \frac {d}{\Gamma(1-d)}\, n^{-1-d}$ is equivalent to \eqref{u} when $a_0=1$ and $L_a(n)\equin \frac{1}{\Gamma(d)}$.

\section{Asymptotic behavior of the Gaussian Quasi-Maximum Likelihood Estimator}\label{SecQMLE}
\subsection{Definition of the estimator}
We will assume that $(X_t)_{t\in \Z}$ is a long-memory one-sided linear process written as an AR$(\infty)$ process, {\it i.e.}
\begin{equation}\label{LRD7}
X_t=\sigma^* \, \varepsilon_t + \sum_{k=1}^\infty u_k(\theta^*)\, X_{t-k}\quad \mbox{for any $t\in \Z$,}
\end{equation}
where 
\begin{itemize} 
\item $(\varepsilon_t)_{t\in \Z}$ is a white noise, such that $\varepsilon_0$ has an absolutely continuous probability measure with respect to the Lebesgue measure and such that $\E[\varepsilon_0^2]=1$;
\item for $\theta={}^t(\gamma,\sigma^2) \in \Theta$ a compact subset of $\R^{p-1}\times (0,\infty)$, 
$(u_n(\theta))_{n\in \N}$  is a sequence of real numbers satisfying for any $\theta \in \Theta$,
\begin{equation}\label{untheta}
u_n(\theta)= L_\theta(n)\,n^{-d(\theta)-1}\quad\mbox{for $n \in \N^*$ and}\quad\sum_{n=1}^\infty u_n(\theta)=1.
\end{equation}
with $d(\theta) \in (0,1/2)$. We also assume that 
{\bf the sequence $(u_n(\theta))$ does not depend on $\sigma^2$};
\item $\theta^*={}^t(\gamma^*,\sigma^{*2})$, $\theta^*$ is in the interior of $\Theta$, with $\sigma^*>0$ an unknown real parameter and $\gamma^* \in \R^{p-1}$ an unknown vector of parameters. 
\end{itemize} 
~\\
A simple example of such a sequence $(u_n(\theta))$ is $u_n(\theta)=(\zeta(1+d))^{-1}\, n^{-1-d}$ for $n \in \N^*$, with $\theta=(d,\sigma^2)\in (0,1/2)\times (0,\infty)$ where $\zeta(\cdot)$ is the  Riemann zeta function. Then $\Theta= [d_m,d_M] \times[\sigma_m^2,\sigma_M^2]$, with  $0<d_m<d_M<1/2$  and $0<\sigma_m^2<\sigma_M^2$.\\
~\\
For ease of reading, denote $d^*=d(\theta^*)$ the long-memory parameter of $(X_t)$. Denote also $d^*_+=d^*+\varepsilon$ where $\varepsilon \in (0,1/2-d^*)$ is chosen as small as possible. Since $(u_n(\theta))_{n\in \N}$ satisfies \eqref{untheta}, we know from Remark \ref{Rem000} that there exists $C_a$ such that for any $t\in\Z$,
\begin{equation}\label{linlin}
X_t=\sum_{i=0}^\infty a_i(\theta^*)\, \varepsilon_{t-i}\quad \mbox{with}\quad |a_i(\theta^*)|\leq \frac {C_a}{i^{1-d^*_+}}~~\mbox{for all $i\in \N^*$}.
\end{equation}
We also deduce from \eqref{cov} that there exists $C_c>0$ satisfying
\begin{equation}\label{cov2}
|r_X(k)|=\big |\cov(X_0,X_k)\big | \leq \frac {C_c}{(1+k)^{1-2d^*_+}}~~\mbox{for all $k\in \N$}.
\end{equation}
In the sequel we will also denote for any $\theta \in \Theta$,
\begin{equation}\label{mt}
m_t(\theta)=\sum_{k=1}^\infty u_k(\theta)\, X_{t-k}\quad \mbox{for any $t\in \Z$}.
\end{equation}
We want to estimate $\theta^*$ from an observed trajectory $(X_1,\ldots,X_n)$, where $(X_t)$ is defined by \eqref{LRD7}. For such an autoregressive causal process, a Gaussian quasi-maximum likelihood estimator is really appropriate, since it is built on the assumption that $(\varepsilon_t)$ is Gaussian white noise, and it is well known that an affine function of $\varepsilon_t$ is still a Gaussian random variable (see for example Bardet and Wintenberger, 2009). It consists in considering the log-conditional density $I_n(\theta)$ of $(X_1,\ldots,X_n)$ when $(\varepsilon_t)$ is a standard Gaussian white noise and with $X_t=\sigma \, \varepsilon_t + m_t(\theta)$,  {\it i.e.}
\begin{equation}\label{I}
I_n(\theta)=\sum_{t=1}^n q_t(\theta)=-\frac{1}{2}\sum_{t=1}^{n}\left(\log \big(\sigma^2\big )
+ \frac{\big (X_{t}-m_t(\theta)\big)^{2}}{\sigma^2}\right)\quad\mbox{for any $\theta \in \Theta$}.
\end{equation}
However, such conditional log-likelihood is not a feasible statistic since  $m_t(\theta)$  depends on $(X_k)_{k\leq 0}$ which is unobserved. Hence it is usual to replace  $m_t(\theta)$ by the following approximation:
\begin{equation}\label{mth}
\widehat m_t(\theta)=\sum_{i=1}^{t-1} u_i(\theta)\, X_{t-i} \quad \mbox{for any $t\in \N^*$},
\end{equation}
with the convention $\sum_{t=1}^0=0$. Then, a quasi conditional log-likelihood $\widehat I_n(\theta)$ can be defined:
\begin{equation}\label{QLik}
\widehat I_n(\theta)=-\frac{1}{2}\sum_{t=1}^{n}\left(\log \big(\sigma^2\big ) + \frac{\big (X_{t}-\widehat m_t(\theta)\big)^{2}}{\sigma^2}\right).
\end{equation}
If $\Theta $ is a subset of $\R^p$ such as for all $\theta \in \Theta$ there exists an almost surely stationary solution of the equation $X_t=\sigma \, \varepsilon_t + m_t(\theta)$ for any $t\in \Z$, we define the Gaussian quasi maximum likelihood estimator (QMLE) of $\theta$ by 
\begin{equation}\label{QMLE}
\widehat \theta_n= \text{Arg}\!\max_{\!\!\!\!\!\!\theta \in \Theta} \widehat I_n(\theta).
\end{equation}
Note that a direct implication of the assumption that $(u_n(\theta))$ does not depend on $\sigma^2$ is that if we denote $\widehat \theta_n={}^t(\widehat \gamma_n,\widehat \sigma_n^2)$ the QMLE, then:
$$
\widehat \gamma_n=\text{Arg}\!\min_{\!\!\!\!\!\!\!\!\!\!\!(\gamma,\sigma^2) \in \Theta}\sum_{t=1}^n \big (X_t -\sum_{k=1}^{t-1} u_k(\gamma)\, X_{t-k}\big )^2~ \mbox{and}~ \widehat \sigma_n^2=\frac 1 n \, \sum_{t=1}^n \big (X_t -\sum_{k=1}^{t-1} u_k(\widehat \gamma_n)\, X_{t-k}\big )^2,
$$
where by writing convention $u_n( \theta)=u_n(\gamma)$. Hence, in this case of long-memory AR$(\infty)$, $\widehat \gamma_n$ is also a {\bf non-linear least square estimator} of the parameter $\gamma$.  

\subsection{Consistency and asymptotic normality of the estimator}

The consistency of the QMLE is established under additional assumptions. \\

\begin{theo}\label{Theo1}
Let $(X_t)_{t\in \Z}$ be a process defined by \eqref{LRD7} and its assumptions. Assume also:
\begin{itemize}
\item for any $n\in \N^*$, $\theta \in \Theta \mapsto u_n(\theta)$ is a continuous function on $\Theta$; 
\item If $u_n(\theta)=u_n(\theta')$ for all $n\in \N^*$ with $\theta=(\gamma,\sigma^2)$ and $\theta'=(\gamma',\sigma^{2})$, then $\theta=\theta'$.  
\end{itemize}
Let $\widehat \theta_n$ be the QMLE defined in \eqref{QMLE}. Then
$$
\widehat{\theta}_{n} \limiteasn \theta^{*}.
$$
\end{theo}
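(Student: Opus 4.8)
The plan is to follow the classical three-step program for consistency of M-estimators, exploiting that $(X_t)$ is a stationary \emph{ergodic} sequence because, by \eqref{linlin}, it is a measurable function of the i.i.d.\ innovations $(\varepsilon_s)_{s\le t}$. First I would introduce the \emph{infeasible} normalized contrast $\frac1n I_n(\theta)$ built on the true $m_t(\theta)$ rather than on $\widehat m_t(\theta)$, and identify its almost sure limit. Writing $X_t-m_t(\theta)=\sigma^*\varepsilon_t+\big(m_t(\theta^*)-m_t(\theta)\big)$ and using that $\varepsilon_t$ is independent of the strict past (hence of $m_t(\theta^*)-m_t(\theta)$), the ergodic theorem gives, for each fixed $\theta$,
\begin{equation*}
\frac1n I_n(\theta)\limiteasn L(\theta):=-\frac12\left(\log\sigma^2+\frac{\sigma^{*2}+\E\big[(m_0(\theta^*)-m_0(\theta))^2\big]}{\sigma^2}\right).
\end{equation*}

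Second, I would control the two gaps separating $\frac1n\widehat I_n$ from $L$. The approximation gap stems from $m_t(\theta)-\widehat m_t(\theta)=\sum_{k\ge t}u_k(\theta)\,X_{t-k}$; since $d(\theta)\ge d_m>0$ the coefficients are summable and the tail $\sum_{k\ge t}\sup_\theta|u_k(\theta)|$ is of order $t^{-d_m}$ up to a slowly varying factor, so combining this with $\E[X_0^2]<\infty$ and Cauchy--Schwarz yields $\sup_\theta\frac1n|\widehat I_n(\theta)-I_n(\theta)|\limiteasn 0$. The second gap requires a \emph{uniform} strong law: combining the pointwise ergodic convergence above with the assumed continuity of $\theta\mapsto u_n(\theta)$, the compactness of $\Theta$, and the domination bound $\E\big[\sup_\theta(X_0-m_0(\theta))^2\big]<\infty$ (itself a consequence of $\sum_k\sup_\theta|u_k(\theta)|<\infty$ together with $\E[X_0^2]<\infty$), gives $\sup_\theta|\frac1n I_n(\theta)-L(\theta)|\limiteasn 0$. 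Together these produce uniform almost sure convergence of $\frac1n\widehat I_n$ to $L$.

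Third, I would prove that $L$ is continuous and admits $\theta^*$ as its \emph{unique} maximizer. Because $m_0$ depends only on $\gamma$, optimizing $L$ over $\sigma^2$ at fixed $\gamma$ shows the optimal value is a strictly decreasing function of $V(\gamma):=\E[(m_0(\gamma^*)-m_0(\gamma))^2]$, so the global maximum is attained exactly when $V(\gamma)=0$, i.e.\ when $\sum_k(u_k(\gamma)-u_k(\gamma^*))\,X_{-k}=0$ almost surely, and then $\sigma^2=\sigma^{*2}$. Since the long-memory linear process has a strictly positive spectral density, a summable linear combination of the $X_{-k}$ vanishing a.s.\ forces all its coefficients to be zero, so $u_k(\gamma)=u_k(\gamma^*)$ for every $k$; the identifiability assumption then yields $\gamma=\gamma^*$. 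Finally, the standard argmax argument---uniform convergence of the contrast to a continuous $L$ having a unique maximizer on the compact set $\Theta$---delivers $\widehat\theta_n\limiteasn\theta^*$.

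I expect the main obstacle to be the uniform control over $\theta\in\Theta$ of the long-memory coefficients, namely establishing $\sum_k\sup_{\theta\in\Theta}|u_k(\theta)|<\infty$ together with a uniform tail estimate, because the slowly varying factor $L_\theta(\cdot)$ depends on $\theta$ and the decay exponent $d(\theta)+1$ is only bounded away from $1$ through $d_m$; this is precisely what makes both the domination bound and the negligibility of $\widehat m_t-m_t$ delicate, whereas in the weakly dependent AR$(\infty)$ setting of \cite{BW} the geometric decay of the coefficients renders these steps routine.
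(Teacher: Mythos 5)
Your overall architecture is the same as the paper's: ergodic theorem for the infeasible contrast built on $m_t(\theta)$, a uniform strong law on the compact $\Theta$ via continuity and the domination $\E\big[\sup_{\theta\in\Theta}|q_0(\theta)|\big]<\infty$ (the paper invokes Straumann's Theorem 2.2.1 for exactly this), control of the feasibility gap through the tail $\widetilde m_t(\theta)=\sum_{k\geq t}u_k(\theta)X_{t-k}$, and a standard argmax identification step. However, there is one genuine gap: in your second step you claim that the tail bound $\sum_{k\geq t}\sup_{\theta}|u_k(\theta)|\lesssim t^{-\underline d}$, together with $\E[X_0^2]<\infty$ and Cauchy--Schwarz, ``yields'' $\sup_{\theta}\frac1n|\widehat I_n(\theta)-I_n(\theta)|\limiteasn 0$. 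What this chain of estimates actually delivers is only $\E\big[\sup_{\theta}|q_t(\theta)-\widehat q_t(\theta)|\big]\leq C\,t^{-\underline d}$, hence convergence of the Ces\`aro mean in $L^1$ (so in probability), not almost surely: the summands $Z_t=\sup_{\theta}|q_t(\theta)-\widehat q_t(\theta)|$ are dependent, and a decaying moment bound on each term does not by itself control $\frac1n\sum_{t=1}^n Z_t$ almost surely. The paper closes precisely this hole with Corollary 1 of Kounias and Weng (1969): since $\sum_{t\geq 1}t^{-1}\E[Z_t]\leq C\sum_{t\geq 1}t^{-1-\underline d}<\infty$, one gets $\sum_{t\geq 1}Z_t/t<\infty$ a.s.\ and then $\frac1n\sum_{t=1}^n Z_t\limiteasn 0$ by Kronecker's lemma. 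This is a short but indispensable ingredient, and it is also where the uniform strict positivity $\underline d>0$ of the memory exponent over $\Theta$ is actually consumed.

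Two smaller remarks. Your identification step is a fleshed-out version of what the paper delegates to Bardet and Wintenberger (2009), and it is sound in substance, but the phrase ``strictly positive spectral density'' is not guaranteed for a general linear filter: $f(\lambda)=\frac{1}{2\pi}\big|\sum_j a_j e^{-ij\lambda}\big|^2$ may vanish at isolated frequencies. You only need positivity almost everywhere (which holds by Hardy-space arguments), or, more elementarily, expand $\sum_k c_k X_{-k}$ with $c_k=u_k(\gamma)-u_k(\gamma^*)$ over the independent innovations and use the convolution equations $\sum_{k=1}^j c_k\,a_{j-k}(\theta^*)=0$ with $a_0(\theta^*)>0$ to get $c_k=0$ for all $k$ by induction. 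Finally, your closing worry about uniformity in $\theta$ of the coefficient bounds is well placed: the paper handles it by choosing $\underline d<\inf_{\theta\in\Theta}d(\theta)$ and appealing to compactness and continuity of $\theta\mapsto u_k(\theta)$, which implicitly presumes uniform control of the slowly varying factors $L_\theta$; your proposal is no weaker than the paper on this point.
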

This result extends the $\widehat \theta_n$ consistency obtained in Bardet and Wintenberger (2009) to short-memory time series models, including ARMA, GARCH, and APARCH, among others, including AR($\infty$) processes. It also applies to long memory AR($\infty$) processes. \\

\begin{Rem}
Regarding the long-memory linear process example, $\theta^*$ could also be estimated using Whittle's estimator, which is constructed from the spectral density and second-order moments of the process. The consistency and asymptotic normality of this estimator were shown by Giraitis and Surgailis (1990). 
\end{Rem}
~\\
~\\
\noindent Having shown the consistency, we would like to show the asymptotic normality of the QML estimator in the case of the long-memory one-sided linear processes considered above.  This amounts to proving it for linear processes whose linear filter depends on a vector of parameters. This will be the case, for example, for FARIMA$(p,d,q)$ processes, for which Boubacar {\it et al.} (2021) \cite{BES} have already shown asymptotic normality in the more general case where $(\varepsilon_t)$ is weak white noise, {\it i.e.} in the case of weak FARIMA$(p,d,q)$ processes.  \\
~\\
As it is typical to establish the asymptotic normality of an M-estimator, we make assumptions about the differentiability of the sequence of functions $(u_n(\theta))_{n\in \N^*}$ with respect to $\theta$: \\
\begin{itemize}
\item[\textbf{(A)}] \textbf{Differentiability of $(u_n(\theta))_{n\in \N^*}$}: for any $n \in \N^*$, the function $u_n(\theta)$ is a ${\cal C}^2(\Theta)$ function and for any $\delta>0$, there exists $C_\delta>0$ such that:
\begin{equation}\label{Diff}
\sup_{n \in \N} \sup_{\theta \in \Theta}\Big \{  n^{1+d(\theta)-\delta} \, \Big (  \big | u_n(\theta) \big |+ \big \| \partial_{\theta} u_n(\theta) \big \| + \big \| \partial^2_{\theta^2} u_n(\theta) \big \| \Big ) \Big \} \leq C_\delta. 
\end{equation}
Moreover we assume that:
\begin{equation}\label{Var}
\mbox{for $v \in \R^{p-1 }$, if for all $k\in \N^*$,}~~{}^t v \, \partial_{\gamma} u_{k}(\theta^*)= 0\quad \Longrightarrow \quad v=0.
\end{equation}
\end{itemize}
{\bf Example} (called {\bf LM} in the numerical applications): For the simple example where $(u_n(\theta))$ is such as $u_n(\theta)=\zeta(1+d)^{-1} n^{-1-d}$ for $n \in \N^*$ with $\theta=(d,\sigma^2)\in  (0,1/2)\times (0,\infty)$, we have:
\begin{eqnarray*}
 \partial_{d} u_n(\theta) &=&-\frac {n^{-1-d}} {\zeta^2(1+d)}\, \big (\zeta(1+d)\, \log(n)+\zeta'(1+d) \big) \\
  \partial^2_{d^2} u_n(\theta)&=&\frac {n^{-1-d}} {\zeta^3(1+d)}\,\big (\zeta^2(1+d)\, \log^2(n)+2\zeta'(1+d)\zeta(1+d)\, \log(n) \\
  && \hspace{4cm}+2 (\zeta'(1+d))^2-\zeta''(1+d)\zeta(1+d)  \big).
\end{eqnarray*}
Therefore \eqref{Diff} of {\bf(A)} is satisfied with $d(\theta)=d$ (note also that $\delta=0$ is not possible). Moreover \eqref{Var} is also clearly satisfied.
~\\
~\\
\begin{theo}\label{theo2}
Consider the assumptions of Theorem \ref{Theo1} and also that $\E[\varepsilon_0^3]=0$ and $\mu_4=\|\varepsilon_0\|_4^4<\infty$.  Then with $\widehat \theta_n$ defined in \eqref{QMLE},  and if \textbf{(A)} holds,
\begin{multline}\label{TLC2}
\sqrt n \, \big (\widehat \theta_n -\theta^*  \big )=\sqrt n \, \left ( \Big ( \begin{array}{c} \widehat \gamma_n \\
\widehat \sigma^2_n \end{array} \Big ) -\Big ( \begin{array}{c} \gamma^*\\
\sigma^{*2} \end{array} \Big )  \right ) \\
\limiteloin {\cal N} \left ( 0 \, , \,\Big (\begin{array}{cc}
(M^*)^{-1} & 0 \\
0 & \sigma^{*4} \, (\mu^*_4-1)
\end{array} \Big )  \right ), 
\end{multline} 
where $M^*=\frac 1 {\sigma^{*2}} \,\sum_{k=1}^\infty \sum_{\ell=1}^\infty \partial_\gamma u_k((\gamma^*,0))\, {}^t \big (\partial_\gamma u_\ell((\gamma^*,0)) \big )\, r_X(\ell-k)$.\\
\end{theo}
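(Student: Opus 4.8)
The plan is to follow the classical route for the asymptotic normality of an M\nobreakdash-estimator, combining a Taylor expansion of the quasi-score with a martingale central limit theorem, the specificity here being the control of the long memory and of the initialization error in \eqref{mth}. Since Theorem~\ref{Theo1} gives $\widehat\theta_n\to\theta^*$ almost surely and $\theta^*$ lies in the interior of $\Theta$, for $n$ large enough $\widehat\theta_n$ is an interior critical point, so $\partial_\theta\widehat I_n(\widehat\theta_n)=0$. A second-order Taylor expansion of $\theta\mapsto\partial_\theta\widehat I_n(\theta)$ around $\theta^*$ then yields
$$
\sqrt n\,\big(\widehat\theta_n-\theta^*\big)=-\Big(\tfrac1n\,\partial^2_{\theta^2}\widehat I_n(\bar\theta_n)\Big)^{-1}\,\tfrac1{\sqrt n}\,\partial_\theta\widehat I_n(\theta^*),
$$
with $\bar\theta_n$ on the segment $[\widehat\theta_n,\theta^*]$. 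The proof thus reduces to three points: (i) a CLT for $\tfrac1{\sqrt n}\partial_\theta\widehat I_n(\theta^*)$; (ii) a uniform law of large numbers giving $\tfrac1n\partial^2_{\theta^2}\widehat I_n(\bar\theta_n)\to -G^*$ with $G^*$ invertible; and (iii) the identification of the resulting sandwich covariance.

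First I would establish the CLT for the \emph{infeasible} score $\tfrac1{\sqrt n}\partial_\theta I_n(\theta^*)$ built on $m_t(\theta)$ of \eqref{mt}. Setting $\mathcal F_t=\sigma(\varepsilon_s,\,s\le t)$ and using $X_t-m_t(\theta^*)=\sigma^*\varepsilon_t$, differentiation of $q_t$ gives $\partial_\gamma q_t(\theta^*)=\sigma^{*-1}\varepsilon_t\,\partial_\gamma m_t(\theta^*)$ and $\partial_{\sigma^2} q_t(\theta^*)=\tfrac1{2\sigma^{*2}}(\varepsilon_t^2-1)$; since $\partial_\gamma m_t(\theta^*)$ is $\mathcal F_{t-1}$-measurable and $\varepsilon_t$ is independent of $\mathcal F_{t-1}$, both are martingale differences. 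I would apply the martingale CLT: the conditional variance converges because $\tfrac1n\sum_t\partial_\gamma m_t\,{}^t\partial_\gamma m_t\to\sigma^{*2}M^*$ in $L^2$ (its variance is $O(n^{4d^*-2})\to0$, so long memory is harmless here), the $\gamma$--$\sigma^2$ block vanishes thanks to $\E[\varepsilon_0^3]=0$, and the $\sigma^2$-block equals $\tfrac{\mu_4-1}{4\sigma^{*4}}$; the Lindeberg condition follows from a Lyapunov bound using $\mu_4<\infty$. Here $M^*$ is finite because \eqref{Diff} makes $(\partial_\gamma u_k(\theta^*))$ absolutely summable while $r_X$ is bounded, and it is invertible because $ {}^tv\,M^*v=0$ forces $\sum_k({}^tv\,\partial_\gamma u_k(\theta^*))\,X_{-k}=0$ a.s., whence $ {}^tv\,\partial_\gamma u_k(\theta^*)=0$ for all $k$ (the spectral density of $(X_t)$ being positive a.e.) and then $v=0$ by \eqref{Var}. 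For the Hessian I would prove a uniform LLN $\sup_{\theta\in\Theta}\|\tfrac1n\partial^2_{\theta^2}I_n(\theta)-g(\theta)\|\to0$ a.s., with $g$ continuous, using the regularity and moment bounds of \eqref{Diff} together with $\mu_4<\infty$; a direct computation of $g(\theta^*)=\E[-\partial^2_{\theta^2}q_0(\theta^*)]$ gives $G^*=\mathrm{diag}\big(M^*,\tfrac1{2\sigma^{*4}}\big)$ (again $\E[\varepsilon_0^3]=0$ kills the off-diagonal block, and the terms linear in $\varepsilon_t$ vanish in expectation). The sandwich $(G^*)^{-1}F^*(G^*)^{-1}$ with $F^*=\mathrm{diag}\big(M^*,\tfrac{\mu_4-1}{4\sigma^{*4}}\big)$ then collapses block by block to $\mathrm{diag}\big((M^*)^{-1},\sigma^{*4}(\mu_4-1)\big)$, which is exactly \eqref{TLC2}.

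The hard part will be to transfer these statements from the infeasible quantities ($m_t$, $I_n$) to the feasible ones ($\widehat m_t$, $\widehat I_n$), namely to show $\tfrac1{\sqrt n}\big(\partial_\theta\widehat I_n(\theta^*)-\partial_\theta I_n(\theta^*)\big)\to0$ in probability, with the analogous negligibility for the Hessians. Writing $\Delta_t:=m_t(\theta^*)-\widehat m_t(\theta^*)=\sum_{j\ge0}u_{t+j}(\theta^*)X_{-j}$ and $\partial_\gamma\Delta_t=\sum_{j\ge0}\partial_\gamma u_{t+j}(\theta^*)X_{-j}$, a direct expansion gives
$$
\partial_\gamma\widehat I_n(\theta^*)-\partial_\gamma I_n(\theta^*)=\sigma^{*-2}\sum_{t=1}^n\Big(-\sigma^*\varepsilon_t\,\partial_\gamma\Delta_t+\Delta_t\,\partial_\gamma m_t-\Delta_t\,\partial_\gamma\Delta_t\Big).
$$
The difficulty is a pure long-memory effect: the crude bound $\sum_{t\le n}\|\Delta_t\|_2$ is too large to divide by $\sqrt n$. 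The key technical estimate I expect to need is that, exploiting the \emph{sharp} polynomial decay \eqref{cov2} of $r_X$ (not merely the summability of $(u_k)$), the positive cross-covariances decorrelate enough to give $\|\Delta_t\|_2=O(t^{-1/2+\delta})$, which is smaller than the naive $O(t^{-d^*+\delta})$. Granting this, the first (martingale) term has variance $\tfrac1n\sum_t\E\|\partial_\gamma\Delta_t\|^2=o(1)$, and the purely quadratic term $\Delta_t\partial_\gamma\Delta_t$ is negligible by the crude bound $\|\Delta_t\partial_\gamma\Delta_t\|_1=O(t^{-1+2\delta})$. The genuinely delicate term is $\Delta_t\partial_\gamma m_t$, because $\|\partial_\gamma m_t\|_2=O(1)$ does not decay: summing $L^1$-norms only yields $O(\sqrt n)$, so one must separate its deterministic bias $\tfrac1{\sqrt n}\sum_t\E[\Delta_t\partial_\gamma m_t]$ — which is $o(1)$ since $\Delta_t$ lives on the remote past while $\partial_\gamma m_t$ is dominated by recent values, so by \eqref{cov2} the sequence $\E[\Delta_t\partial_\gamma m_t]$ is summable — from a centered remainder whose variance must be shown to be $o(n)$. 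Establishing this last point rigorously, i.e. the fourth-order/covariance control of the quadratic initialization term under long-range dependence (using \eqref{cov2} and $\mu_4<\infty$), is the step I anticipate to be the most demanding.
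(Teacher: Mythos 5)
Your architecture coincides with the paper's: a Taylor expansion of the quasi-score at the interior critical point, the ergodic martingale-difference CLT for the infeasible score at $\theta^*$, a feasible/infeasible transfer for the score and the Hessian, and the block-diagonal sandwich collapse to $\mathrm{diag}\big((M^*)^{-1},\sigma^{*4}(\mu_4-1)\big)$; your central estimate $\|\Delta_t\|_2=O(t^{-1/2+\delta})$ for $\Delta_t=\widetilde m_t(\theta^*)$ is exactly the paper's Lemma~\ref{lem2} specialized to $s=t$, $\theta=\theta^*$. However, the step you yourself flag as the most demanding --- proving that the centered part of $\sum_{t\le n}\Delta_t\,\partial_\gamma m_t(\theta^*)$, together with the companion terms $\varepsilon_t\,\partial_\gamma\Delta_t$ and $\Delta_t\,\partial_\gamma\Delta_t$, has variance $o(n)$ --- is only announced, and it is precisely where all the work in the paper's proof lies: the fourth-moment formula for products of four linear forms in $(\varepsilon_t)$ (the $(\mu_4-3)$ cumulant term plus the three pairings), combined with the cross-covariance bounds of Lemmas~\ref{lem00}, \ref{lem2} and \ref{lem3}, and a page of sum/integral comparisons that bound the double sum over $s<t$ in \eqref{decomp} by $C\,n^{4d^*_+-2d}$, yielding \eqref{majo}. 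Without these lemmas, your sketch contains no proof of the decisive negligibility \eqref{vardL}, so as it stands the proposal has a genuine gap at the heart of the argument.

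Two further points. First, a quantitative slip: you justify the bias control by asserting that the sequence $\E\big[\Delta_t\,\partial_\gamma m_t(\theta^*)\big]$ is summable; by Lemma~\ref{lem3} at $s=t$ (with Lemma~\ref{lem00} transferring the bound to the derivative) this expectation is only $O\big(t^{d^*-1+\delta}\big)$, which is \emph{not} summable since $d^*>0$. The conclusion survives because the partial sums are $O\big(n^{d^*+\delta}\big)=o(\sqrt n)$, but your stated reason is wrong. Second, working only at $\theta^*$ is adequate for the score (there $4d^*_+-2d-1\approx 2d^*-1<0$ holds automatically), but the Hessian step requires control on a set containing the intermediate point $\bar\theta_n$, and for $\theta$ with $d(\theta)$ too far below $d(\theta^*)$ the bound $n^{4d^*_+-2d-1}$ does not vanish. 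The paper handles this by first replacing $\Theta$ with the compact $\widetilde\Theta$ on which $d(\theta)>2d(\theta^*)-1/2$, so that \eqref{dd} holds uniformly, and by observing that $\widehat\theta_n\in\widetilde\Theta$ eventually almost surely, so the asymptotic law is unchanged; your proposal should make this localization explicit. With that device and with the variance estimate actually carried out, your plan would reproduce the paper's proof.
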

It is clear that $\widehat \theta_n$ satisfies \eqref{TLC2} in the case of the FARIMA processes (but this asymptotic normality has been already established under more general assumptions in Boubacar Maïnassara {\it et al.}, 2021, \cite{BES}) or in the case of the {\bf LM} processes example. It is also worth noting that the central limit theorem is written in exactly the same way as the one obtained in \cite{BW}, although the latter dealt only with weakly dependent AR$(\infty)$ processes. \\
\begin{Rem}
As already mentioned, Boubacar Ma\"inassara et al. (2021) \cite{BES} have also established the almost certain convergence and asymptotic normality of the QML estimator in the specific case of FARIMA processes, but allowing the white noise $(\varepsilon_t)$ to be a weak white noise (non-correlation) and not a strong white noise as in our work. This comes at the price of slightly stronger moment conditions: in \cite{BES}, a moment of order $2+\nu$ is required for almost sure convergence and a moment of order $4+\nu$ for asymptotic normality (with $0<\nu<1$). This is the price to pay in their Assumption A4 for working with strong mixing properties of  $(\varepsilon_t)$.\\
\end{Rem}
\begin{Rem}
Of course, in this specific context of linear long-memory processes, we would like to make a comparison between the asymptotic results for the convergence of the QMLE estimator and those obtained with Whittle's estimator in \cite{gi-sur}. In this paper, more precisely in Theorem 4, the asymptotic covariance matrix of $\widehat \gamma_n $ is given by the spectral density $f_\gamma$ and is written as $(4 \pi)^{-1}\, \int_{-\pi}^\pi \big (\partial_\gamma \log (f_\gamma(\lambda)) \big ) \, {}^t \big (\partial_\gamma \log (f_\gamma(\lambda)) \big )\, d\lambda$. However, Dahlhaus in \cite{Da} has shown that this asymptotic covariance matrix is also that of the maximum likelihood estimator in the case of a Gaussian process, the latter also being $(M^*)^{-1}$ if $(\varepsilon_t)$ is Gaussian white noise. This means that asymptotically, the QML and Whittle estimators behave identically. However, we will see a slight numerical advantage due to the convergence of the QMLE in the case of observed trajectories whose size is not too large.\\
\end{Rem}
\subsection{Case of a non-centered long-memory linear process}
Finally, we can consider the special case where the process $(X_t)$ is not centered and estimate the location parameter $\mu^*=\E[X_0]$. This means that $(X_t)$ can now be written as:
\begin{equation}\label{location}
X_t=\mu^* + \sum_{i=0}^\infty a_i(\theta^*)\, \varepsilon_{t-i}\quad \mbox{for all $t\in \Z$,}
\end{equation}
with the same assumptions on $\theta$, on $(a_i(\theta))$ and on $(\varepsilon_t)$. \\
~\\
First of all, the AR$(\infty)$ representation we used to define $(X_t)$ does not allow $\mu^*$ to intervene, so the QML estimator can not estimate this parameter. So, if $(X_t)$ satisfies \eqref{location}, then $(X_t)$ still satisfies \eqref{LRD7}. This is because $\sum_{k=1}^\infty u_k(\theta)=1$ for any $\theta$. Consequently, the QML estimate of the parameter $\theta$ is not at all affected by the fact that $(X_t)$ is not a centered process and verifies \eqref{location} and Theorems \ref{Theo1} and \ref{theo2} are still valid. Note that the same applies to the Whittle's estimator, as it was already remarked in Dahlhaus (1989). \\
~\\
Concerning the estimation of the localization parameter $\E[X_0]=\mu^*$ for long-memory processes, this question has been the subject of numerous publications. Among the most important are \cite{A}, \cite{ST} and the review article \cite{Be93}. We are dealing here with long-memory linear processes, and the article \cite{A} had already shown the most important point: we can not expect a convergence rate in $\sqrt n$, contrary to the other process parameters. In the case of the QML estimator, this can be explained by the fact that $\mu^*$ cannot intervene in the equation \eqref{LRD7}, contrary to what would happen for an ARMA process, for example.\\
~\\
More precisely, from these references, under the assumptions of Theorem \ref{theo2} except that $(X_t)$ is defined by \eqref{location}, we obtain:
$$
\frac {n^{1/2-d(\theta^*)}}{L_a(n)} \, \big ( \overline X_n - \mu^* \big ) \limiteloin {\cal N} \Big (0 \, , \, \frac {C_{d(\theta^*)}}{d(\theta^*)\, (2 d(\theta^*)+1)} \Big )\qquad\mbox{with}\quad \overline X_n=\frac 1 n \, \sum_{k=1}^n X_k,
$$ 
and $C_d=\int_0^\infty (u+u^2)^{d-1}\,du$. However, as we are considering linear processes here, Adenstedt (1974) \cite{A} proved by a Gauss-Markov type theorem that there exists a Best Linear Unbiased Estimator (BLUE) and provided its asymptotic efficiency. By adapting its writing, it will be enough to consider the matrix $\Sigma(\theta)= \big ( r_X(|j-i|) \big )_{1\leq i,j\leq n}$ with $\displaystyle r_X(k)= \cov(X_0,X_k)=\sum_{i=0}^{\infty}{a_i(\theta)\, a_{i+k}(\theta)} $ and define:
$$
\widehat \mu_{BLUE}(\theta^*)=\big ( {}^t \1 \, \Sigma^{-1}(\theta^*) \, \1 \big ) ^{-1}   {}^t \1 \, \Sigma^{-1}(\theta^*) \, X, \qquad \mbox{with}\quad X={}^t(X_1,\ldots,X_n).
$$
Then it is established in \cite{A} that $\displaystyle \frac{\var\big (\overline X_n\big )}{\var\big (\widehat \mu_{BLUE}(\theta^*) \big )}\limiten \frac {\pi \, d(\theta^*)\, (2d(\theta^*)+1)}{B\big (1-d(\theta^*),1-d(\theta^*)\big )\, \sin \big (\pi \, d(\theta^*)\big )}$, where $B(a,b)$ is the usual Beta function, and this limit belongs to $[0.98,1]$ when $0<d(\theta^*)<1/2$. Therefore, since $\widehat \mu_{BLUE}(\theta^*)$ is a linear process, we obtain:
$$
\frac {n^{1/2-d(\theta^*)}}{L_a(n)} \, \big ( \widehat \mu_{BLUE}(\theta^*) - \mu^* \big ) \limiteloin {\cal N} \Big (0 \, , \, \frac {\pi \,C_{d(\theta^*)}}{B\big (1-d(\theta^*),1-d(\theta^*)\big )\, \sin \big (\pi \, d(\theta^*)\big )} \Big ).
$$
Finally, the estimation of $\theta^*$ by $\widehat \theta_n$ makes the use of the BLUE estimator of $\mu^*$ effective. Indeed, as $\widehat \theta_n$ is a convergent estimator of $\theta^*$, as $\theta \in (0,0.5) \mapsto \big ( {}^t \1 \, \Sigma^{-1}(\theta) \, \1 \big ) ^{-1} {}^t \1 \, \Sigma^{-1}(\theta)$ is a continuous function, we deduce by Slutsky's lemma that:
$$
\frac {n^{1/2-d(\theta^*)}}{L_a(n)} \, \big ( \widehat \mu_{BLUE}(\widehat \theta_n) - \mu^* \big ) \limiteloin {\cal N} \Big (0 \, , \, \frac {\pi \,C_{d(\theta^*)}}{B\big (1-d(\theta^*),1-d(\theta^*)\big )\, \sin \big (\pi \, d(\theta^*)\big )} \Big ).
$$
\section{Numerical applications}\label{simu}
\subsection{Numerical simulations}
In this section, we report the results of Monte Carlo experiments conducted with different long-memory causal linear processes. More specifically, we considered:
\begin{itemize}
\item Three different processes generated from Gaussian standard white noises:
\begin{enumerate}
\item A FARIMA$(0,d,0)$ process, denoted {\bf FARIMA}, with parameters $\sigma^2=4$ and $d=0.1$, $0.2$, $0.3$ and $0.4$;
\item A FARIMA$(1,d,0)$ process, denoted {\bf FARIMA(1,d,0)}, with parameters $\sigma^2=4$ and $d=0.1$, $0.2$, $0.3$ and $0.4$, and AR-parameter $\alpha=0.5$ and $0.9$;
\item A long-memory causal affine process, denoted {\bf LM}, defined by:
$$
X_t=a_0 \, \varepsilon_t + \zeta(1+d)^{-1} \, \sum_{k=1}^\infty k^{-1-d} \, X_{t-k}\quad \mbox{for any $t\in \Z$},
$$
with parameters $\sigma^2=4$ and $d=0.1$, $0.2$, $0.3$ and $0.4$.
\end{enumerate}
\item Several trajectory lengths: $n=300,~1000,~3000$ and $10000$.
\item In the case of the {\bf FARIMA} process, we compared the accuracy of the QMLE with the one of the Whittle estimator which also satisfies a central limit theorem (see \cite{gi-sur}). We denote $\widehat \theta_{W}=( \widehat d_W,\widehat \sigma_W^2)$ this estimator. \\
\end{itemize}
\noindent The results are presented in Tables \ref{Table1}, \ref{Table7} and \ref{Table2}. \\
~\\
\begin{table*}
\centering
\begin{tabular}{l|c||c|c||c|c||c|c||c|c|}
n & & \multicolumn{2}{c|}{$d=0.1$, $\sigma^2=4$} &\multicolumn{2}{c|}{$d=0.2$, $\sigma^2=4$} &\multicolumn{2}{c|}{$d=0.3$, $\sigma^2=4$} &\multicolumn{2}{c|}{$d=0.4$, $\sigma^2=4$}  \\
\hline  
 $300$&$\widehat \theta_n=( \widehat d_n,\widehat \sigma_n^2)$ &0.045 & 0.327 &0.045   &0.317
&0.046 &0.318 
 &0.050 &0.327\\
& $\widehat \theta_{W}=(\widehat d_W, \widehat \sigma_W^2)$& 0.050   &0.327 & 0.050 &0.318
& 0.051 & 0.319
 & 0.053 &0.332
\\
$1000$ & $\widehat \theta_n=( \widehat d_n,\widehat \sigma_n^2)$ &0.024 & 0.179&
0.024 & 0.179  
& 0.025 & 0.183&
0.025 & 0.184  \\
& $\widehat \theta_{W}=(\widehat d_W, \widehat \sigma_W^2)$ & 0.026  & 0.179&
 0.026 & 0.179 
& 0.026 & 0.183
 &0.026 &0.185 \\
$3000$&$\widehat \theta_n=( \widehat d_n,\widehat \sigma_n^2)$ & 0.014 &0.103  &0.014 &
0.105 & 0.014 & 
0.103 & 0.015 & 
0.100 \\
& $\widehat \theta_{W}=(\widehat d_W, \widehat \sigma_W^2)$ & 0.014 &0.103  &0.015 &
 0.106& 0.014 &
0.103 & 0.015
&0.100 \\
$10000$&$\widehat \theta_n=( \widehat d_n,\widehat \sigma_n^2)$ & 0.007 & 0.056 & 0.007 & 0.056 & 0.008 & 0.056 & 0.008   & 0.052   \\
& $\widehat \theta_{W}=(\widehat d_W, \widehat \sigma_W^2)$  & 0.007 & 0.056 & 0.007 & 0.056 &  0.008 &0.057   & 0.008  & 0.052\\
\hline
\end{tabular}
\caption{Square roots of the MSE computed for the QMLE $\widehat \theta_n$ and the Whittle estimator in the case of a {\bf FARIMA} process computed from $1000$ independent replications. }
\label{Table1}
\end{table*}
\begin{center}
\begin{figure}[h]
\includegraphics[scale=0.35]{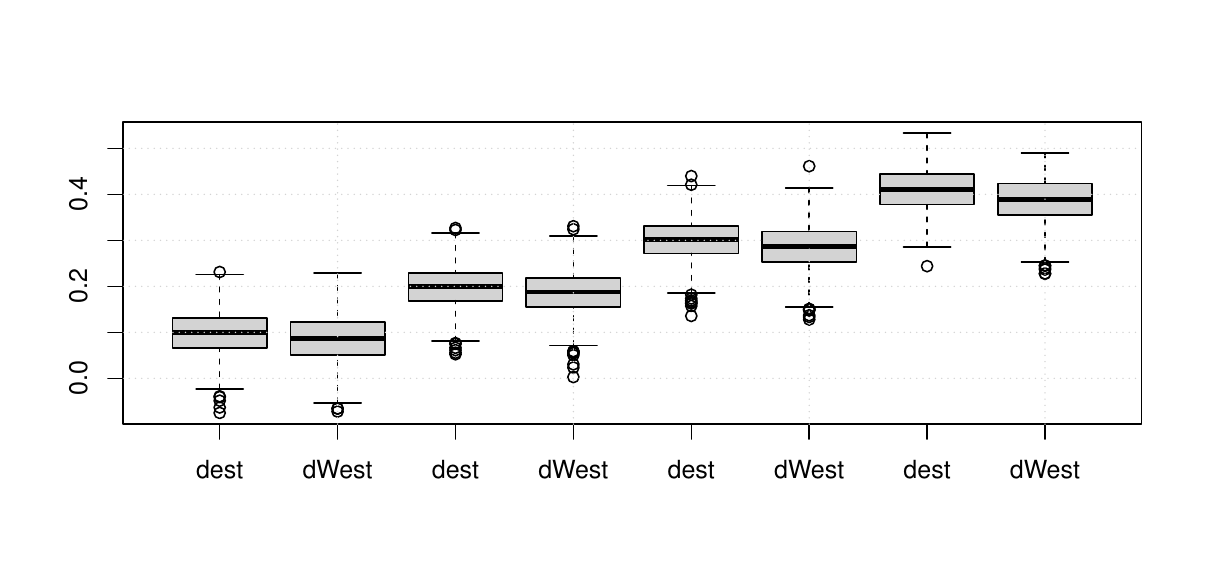}\includegraphics[scale=0.33]{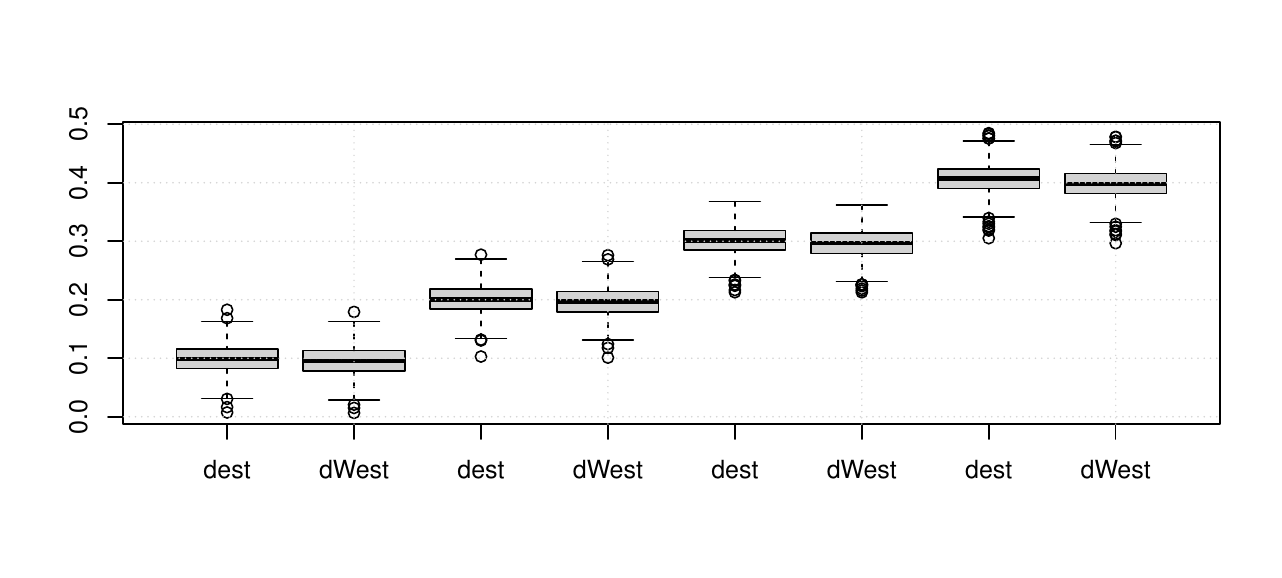}
\includegraphics[scale=0.32]{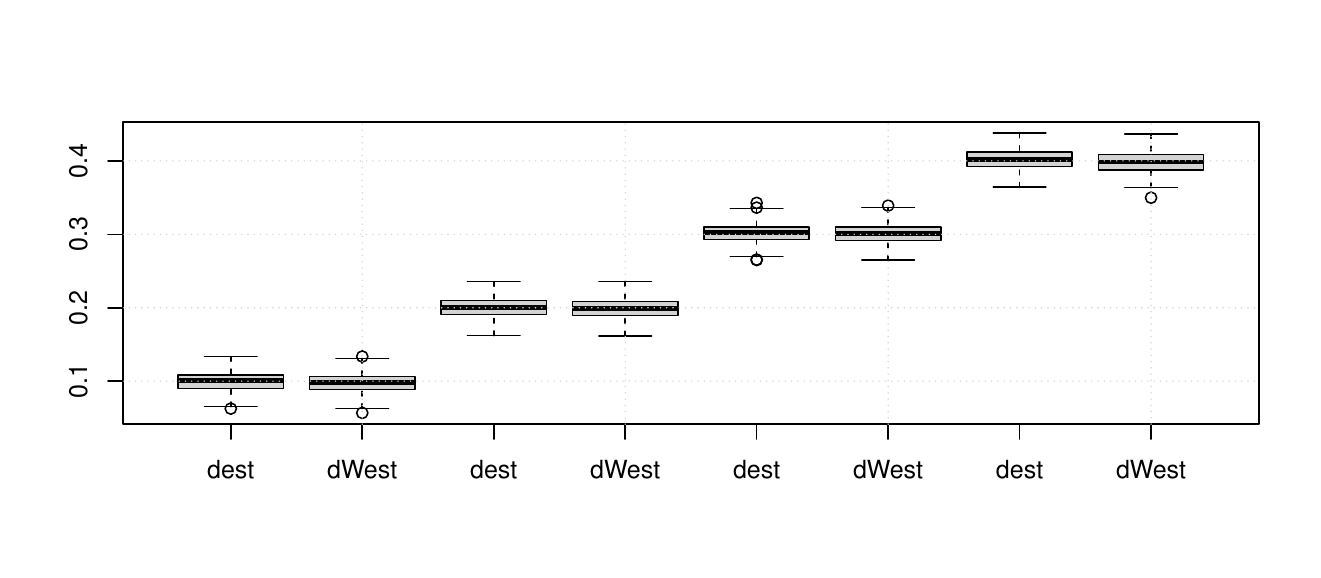}\includegraphics[scale=0.32]{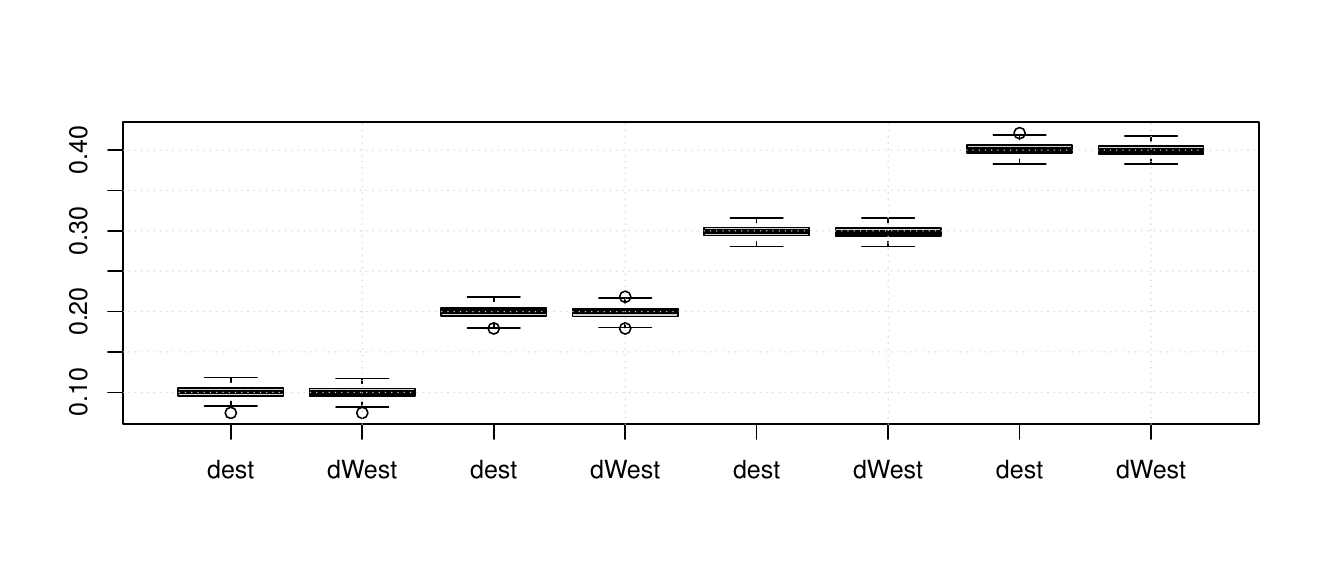}
\caption{Boxplots for estimating $d$ on FARIMA$(0,d,0)$ processes with the estimators $\widehat d_n$ and $\widehat d_W$ (denoted {\tt dest} and {\tt dWest} for $n=300$ (top left), $n=1000$ (top right), $n=3000$ (bottom left) and $n=10000$ (bottom right).} \label{Fig1}
\end{figure}
\end{center}
~\\
~\\
\begin{table*}
\centering
\begin{tabular}{ll||c|c||c|c||c|c||c|c|}
& &  \multicolumn{2}{c|}{$n=300$} &\multicolumn{2}{c|}{$n=1000$} &\multicolumn{2}{c|}{$n=3000$} &\multicolumn{2}{c|}{$n=10000$}  \\
& &  $ \widehat d_n$ & $\widehat \alpha_n$ & $ \widehat d_n$ & $\widehat \alpha_n$  & $ \widehat d_n$ & $\widehat \alpha_n$  & $ \widehat d_n$ & $\widehat \alpha_n$  \\
\hline  
 $d=0.1$ & $\alpha=0.5$ & 0.099 & 0.116 & 0.063  & 0.070 &0.043 & 0.048
& 0.024 & 0.026 \\
&  $\alpha=0.9$ &  0.129 & 0.110 &
 0.060 & 0.041 & 
0.029 & 0.017 & 0.015  & 0.008 \\
\hline
 $d=0.2$ & $\alpha=0.5$ &  0.128 & 0.140 & 0.080 & 0.084 & 0.041 & 0.046
& 0.022 & 0.023\\
&  $\alpha=0.9$ &  0.110& 0.084 &
 0.052 &0.030& 
 0.028& 0.016&  0.014 & 0.009\\
\hline
 $d=0.3$ & $\alpha=0.5$ &  0.148 & 0.158 & 0.081 & 0.088 & 0.042  & 0.045
& 0.023 & 0.026 \\
&  $\alpha=0.9$ & 0.103 & 0.067 &
  0.055 & 0.034& 
0.027 & 0.015&  0.016 & 0.009 \\
\hline
 $d=0.4$ & $\alpha=0.5$ &   0.197 & 0.202 &  0.119 & 0.125 &  0.042 & 0.045 
& 0.023 & 0.025\\
&  $\alpha=0.9$ &0.132  & 0.054&
 0.069 & 0.034 & 
 0.036& 0.017 & 0.018 & 0.009 \\
\hline
\end{tabular}
\caption{Square roots of the MSE computed for the QMLE $\widehat \theta_n$ in the case of the {\bf FARIMA(1,d,0)} process computed from $1000$ independent replications. }
\label{Table7}
\end{table*}
~\\
~\\
\begin{table*}
\centering
\begin{tabular}{l|c||c|c||c|c||c|c||c|c|}
n & & \multicolumn{2}{c|}{$d=0.1$, $\sigma^2=4$} &\multicolumn{2}{c|}{$d=0.2$, $\sigma^2=4$} &\multicolumn{2}{c|}{$d=0.3$, $\sigma^2=4$} &\multicolumn{2}{c|}{$d=0.4$, $\sigma^2=4$}  \\
\hline  
 $300$&$\widehat \theta_n=( \widehat d_n,\widehat \sigma_n^2)$ & 0.048 & 0.082 & 0.054
& 0.083 & 0.059 & 0.080 
& 0.065 &0.080 \\
$1000$ & $\widehat \theta_n=( \widehat d_n,\widehat \sigma_n^2)$& 0.025& 0.045 &
 0.032& 0.047 & 
 0.032 & 0.045&  0.038& 
0.046 \\
$3000$&$\widehat \theta_n=( \widehat d_n,\widehat \sigma_n^2)$  &0.014 & 0.025 &0.017 &
  0.027 &0.018 &
0.024 & 0.020  & 
0.026\\
$10000$&$\widehat \theta_n=( \widehat d_n,\widehat \sigma_n^2)$ & 0.008 & 0.013 &0.010 & 0.013 & 0.011 &  0.015  &0.012 & 0.014 \\
\hline
\end{tabular}
\caption{Square roots of the MSE computed for the QMLE $\widehat \theta_n$ in the case of the {\bf LM} process computed from $1000$ independent replications. }
\label{Table2}
\end{table*}
~\\
\noindent The results of Tables \ref{Table1} and \ref{Table2} show a weak effect of the value of $d$ on the speed of convergence of the $\widehat d_n$ estimator and, more generally, of $\widehat \theta_n$, which may seem counter-intuitive since the long memory being stronger, the effect of initial values should be stronger. To investigate this further, we carried out new numerical studies using simulations of the {\bf FARIMA} process for values of $d$ approaching $0.5$, {\it i.e.} $d=0.43$, $d=0.46$ and $d=0.49$, and the results are shown in Table \ref{Table043}. \\
\begin{table*}
\centering
\begin{tabular}{l|c||c|c||c|c||c|c|}
n & & \multicolumn{2}{c|}{$d=0.43$, $\sigma^2=4$} &\multicolumn{2}{c|}{$d=0.46$, $\sigma^2=4$} &\multicolumn{2}{c|}{$d=0.49$, $\sigma^2=4$}   \\
\hline  
 $300$&$\widehat \theta_n=( \widehat d_n,\widehat \sigma_n^2)$ & 0.053 & 0.328 & 0.065 & 0.369  & 0.113 & 0.633 
\\
$1000$ & $\widehat \theta_n=( \widehat d_n,\widehat \sigma_n^2)$& 0.028 & 0.177 &  0.036 & 
 0.189 &  0.066 & 
 0.283   \\
$3000$&$\widehat \theta_n=( \widehat d_n,\widehat \sigma_n^2)$  &0.016 & 0.113 & 0.018& 0.109 &0.036 & 0.132\\
$10000$&$\widehat \theta_n=( \widehat d_n,\widehat \sigma_n^2)$ & 0.009 & 0.059& 0.011& 0.057 & 0.021 & 0.071  \\
\hline
\end{tabular}
\caption{Square roots of the MSE computed for the QMLE $\widehat \theta_n$ of {\bf FARIMA} process computed from $1000$ independent replications when $d$ is close to $0.5$. }\label{Table043}
\end{table*}
~\\
\noindent {\bf Conclusions of the simulations:} 
\begin{enumerate}
\item The results of the simulations show that the consistency of the QML estimator $\widehat \theta_n$ is satisfied and also that its $1/\sqrt n$ convergence rate of the estimators almost occurs for all processes considered.
\item The value of the parameter $d$ seems to have little influence on the speed of convergence of the estimators as long as $d$ does not get too close to $0.5$. However, when we consider values of $d$ which increase towards $0.5$, if the rate of convergence still looks good in $\sqrt n$, the asymptotic variance considerably increases.
\item When a short-memory component is added to the long-memory component, as in the case of a FARIMA$(1,d,0)$ process, the rate of convergence to $d$ deteriorates, especially for small trajectories. But the rate of convergence still seems to be in $\sqrt n$. We can also see that the rate of convergence deteriorates much more sharply than for the FARIMA$(0,d,0)$ process as $d$ increases towards $0.5$.
\item In the case of the FARIMA process, the comparison between the QML and Whittle estimators leads to very similar results for large $n$, but for $n=300$ the QML estimator provides slightly more accurate estimate, in particular with a more centered distribution around the estimated value.
\end{enumerate}
\subsection{Application on real data}
Here, we will apply the QML estimator to a time series observation known to have a long memory. These are monthly temperature (in degree Celsius) for the northern hemisphere for the years 1854-1989, from the data base held at the Climate Research Unit of the University of East Anglia, Norwich, England. The numbers consist of the temperature difference from the monthly average over the period 1950-1979. For our purposes, and given the general rise in temperatures due to climate change, it is preferable to work on detrended data, for example using simple linear regression, as had already been done in \cite{Be}. Figure 2 shows the two time series:
\begin{center}
\begin{figure}
\includegraphics[scale=0.35]{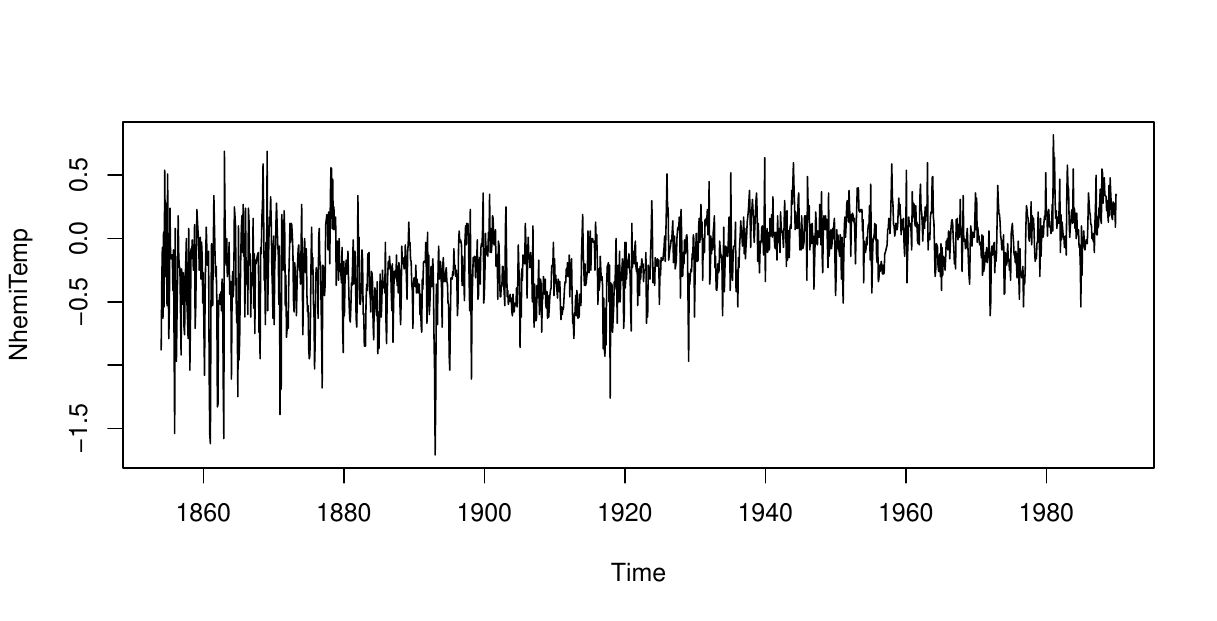}\includegraphics[scale=0.35]{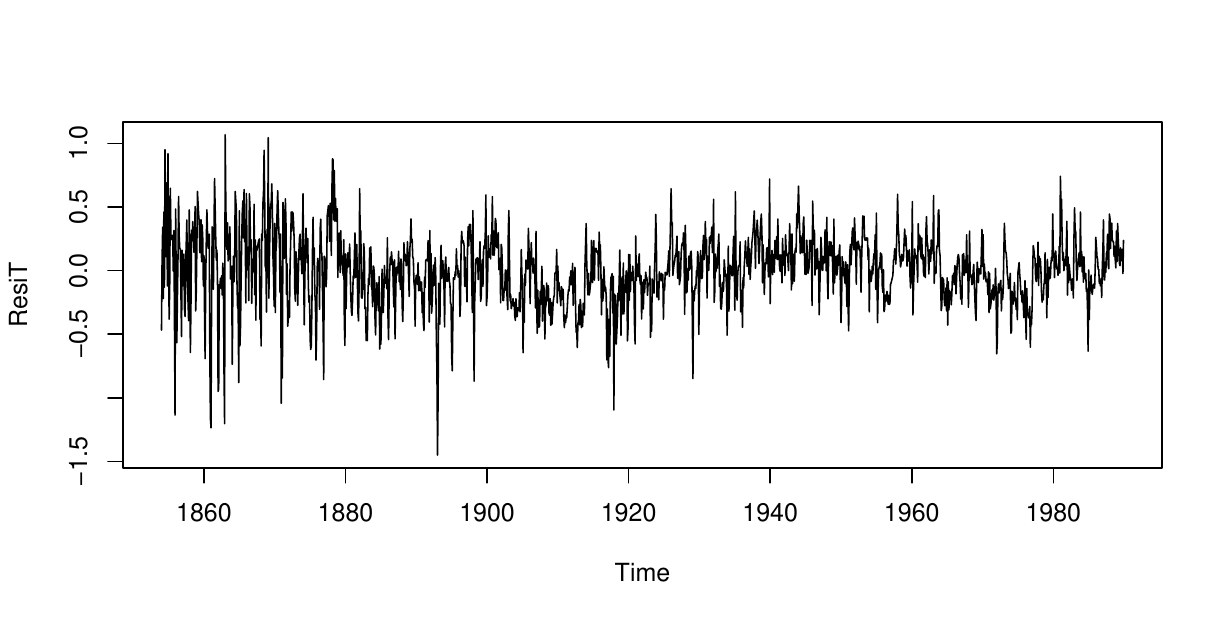}
\caption{Recentered series of des monthly temperatures (in degree Celsius) for the northern hemisphere for the years 1854-1989 (left) and the same series detrended by simple linear regression (right).} \label{Fig2}
\end{figure}
\end{center}
These data have been studied in \cite{Be} (see for example p.179), and Whittle's estimator of the long memory parameter for a FARIMA process applied to detrended data yielded $\widehat d_W \simeq 0.37$, while the observed path size is $n=1632$.\\
We applied the QML estimator for the FARIMA$(0,d,0)$ process to this same series and, as we might have expected, the result was almost identical $\widehat d_n \simeq 0.37$, with $\widehat \sigma_n \simeq 0.056$. We also applied the QML estimator for processes {\bf LM} and the result obtained is rather $\widehat d_n \simeq 0.44$, which is not very far from the previous value. This confirms the long-memory nature of this series, and the implementation of a goodness-of-fit test could enable us to go a little further in choosing between the 2 models or others (note that such a test has been implemented for FARIMA processes in \cite{BES23}).
\section{Conclusion}
In this paper, we have shown that the QML estimator, which offers excellent convergence results for parameters of classical short-memory time series such as GARCH, ARMA, ARMA-GARCH or APARCH processes, also gives excellent results for long-memory time series. This had already been established for FARIMA processes, even with weak white noise, in \cite{BES}. And we generalize this to all long-memory linear processes, offering a very interesting alternative to Whittle estimation, both from a theoretical and a numerical point of view.
\section{Proofs}\label{proofs}
\subsection{Proofs of the main results}
\begin{proof}[Proof of Proposition \ref{prop1}]
Using $B$ the lag or backshift linear operator on $\R^{\Z}$, we can denote $X=S(B)\, \varepsilon$, where $X=(X_t)_{t\in \Z}$ and $\varepsilon=(\varepsilon_t)_{t\in \Z}$ and $S(B)=\sum_{i=0}^\infty a_i \, B^{i}$. We know that there exists a linear operator denoted $S^{-1}$ such as  $\varepsilon=S^{-1}(B)\, X$. As a consequence, $X=a_0\, \varepsilon+(S(B)-a_0\, I_d)\,\varepsilon=a_0\, \varepsilon+(S(B)-a_0\, I_d)S^{-1}(B)\, X=a_0\, \varepsilon+(I_d-a_0\, S^{-1}(B))\, X$ which is the affine causal representation of $X$. \\
Let $X_t=a_0\, \varepsilon_t+\sum_{i=1}^\infty u_i \, X_{t-i}$. Then, for any $t\in \Z$,
\begin{eqnarray*}
X_t&=&a_0\, \varepsilon_t+\sum_{i=1}^\infty u_i\, X_{t-i} \\ 
&=& a_0\, \varepsilon_t+ \sum_{i=1}^\infty \sum_{j=0}^\infty u_i\,a_j\, \varepsilon_{t-i-j} \\
& =& a_0\, \varepsilon_t+  \sum_{k=1}^\infty \Big ( \sum_{j=0}^{k-1} u_{k-j}\, a_j  \Big ) \, \varepsilon_{t-k}. 
\end{eqnarray*}
As a consequence, denoting $u_0=-1$, for any $k \in \N^*$, 
\begin{equation}\label{prod}
 \sum_{j=0}^{k-1} u_{k-j}\, a_j =a_k \quad
\Longrightarrow\quad \Big (\sum_{i=0}^k u_i \Big ) \, \Big (\sum_{j=0}^k a_j \Big )=0.
\end{equation}
Finally, since the convergence radius of the power series $\sum_{\ell=0}^\infty  a_\ell\, z^\ell$ is $1$ from asymptotic expansion \eqref{a2}, we deduce that for any $z\in \C$, $|z|<1$, 
\begin{equation}\label{prod2}
\Big (\sum_{k=0}^\infty u_k \, z^k \Big ) \, \Big (\sum_{\ell=0}^\infty  a_\ell\, z^\ell \Big )=-a_0.
\end{equation}
Now, we are going to use a Karamata Tauberian theorem as it is stated in Corollary 1.7.3 of Bingham {\it et al.} (1987):\\
~\\
{\em 
Fix $\rho>0$ and let $L$ a slow varying function. Then if $(\alpha_n)_{n \in \N}$  is a sequence of nonnegative real numbers and the power series $A(s)=\sum_{n=0}^\infty \alpha_n\, s^n$ converges for any $s\in[0,1)$, then 
\begin{equation}\label{Kara}
\sum_{k=0}^n\alpha_k \equin L(n)\, n^\rho ~\Longleftrightarrow ~
A(s)\sim \frac{\Gamma(1+\rho)}{ (1-s)^{\rho}}\, L\big ((1-s)^{-1}\big )~\mbox{as $s\to 1^{-}$.}
\end{equation}}
\newline
\noindent Note that this result is also established if there exists $N_0\in \N$ such as $(\alpha_n)_{n \geq N_0}$  is a sequence of nonnegative real numbers. We first apply \eqref{Kara} to $(\alpha_n)=(a_n)$. Indeed, from \eqref{a2} and with $\rho=d$, there exists $N_0\in \N$ such as $(a_n)_{n \geq N_0}$  is a sequence of nonnegative real numbers and $\sum_{k=0}^n a_k \equin L(n)\, n^\rho$ with $L(\cdot)=\frac {L_a(\cdot)}{d}$.  Therefore, we deduce that
\begin{equation}\label{Kara2}
\sum_{n=0}^\infty a_n\, s^n\sim \frac{\Gamma(1+d)}{d\, (1-s)^{d}}\, L_a\big ((1-s)^{-1}\big )~\mbox{as $s\to 1^{-}$.}
\end{equation}
Therefore, from \eqref{prod2}, the following expansion can be deduced:
\begin{equation}\label{Kara3}
\sum_{n=0}^\infty u_n\, s^n\sim -\frac{a_0\, (1-s)^{d}}{\Gamma(d)}\, L^{-1}_a\big ((1-s)^{-1}\big )~\mbox{as $s\to 1^{-}$.}
\end{equation}
On the other hand, if we consider \eqref{prod2} when $s\to 1^{-}$, $\sum_{\ell=0}^\infty  a_\ell\, s^\ell \to \infty$ since $(a_n)$ satisfies \eqref{a2}. As a consequence,
$$
\sum_{n=0}^\infty u_n\, s^n \to 0=\sum_{n=0}^\infty u_n \quad \mbox{when $s\to 1^{-}$}.
$$
We deduce that $u_n \limiten 0$ and the sequence $(U_n)_{n\in \N}$ can be defined where we denote $U_n=\sum_{k=n+1}^\infty u_k$. But since $\sum_{n=0}^\infty u_n=0$, for any $s\in [0,1]$,
$$
\sum_{k=0}^\infty u_k\, s^k=(s-1)\, \sum_{k=0}^\infty U_k\, s^k.
$$ 
Using \eqref{Kara3}, we deduce
$$
\sum_{k=0}^\infty U_k\, s^k \sim \frac{a_0\, (1-s)^{d-1}}{\Gamma(d)}\, L^{-1}_a\big ((1-s)^{-1}\big )~\mbox{as $s\to 1^{-}$.}
$$
From \eqref{prod}, we also have for any $n \in \N$
\begin{equation}\label{prod3}
\Big (\sum_{k=0}^n u_k \Big ) \, \Big (\sum_{\ell=0}^n  a_\ell \Big )=-a_0
\end{equation}
Since $(a_n)$ satisfies \eqref{a2}, we know that there exists $N_0$ such as $a_n>0$ and $\sum_{\ell=0}^n  a_\ell >0$ for any $n \geq N_0$. Therefore we know from \eqref{prod3} that for any $n \geq N_0$, $\sum_{k=0}^n u_k<0$ and thus $U_n>0$ since $\sum_{k=0}^\infty u_k=0$. Thus we can apply \eqref{Kara} to $(\alpha_n)=(U_n)$ with $\rho=1-d$ and this induces
$$
\sum_{k=0}^n U_k \equin \frac{a_0\,}{\Gamma(d)\, \Gamma(2-d)}\, L^{-1}_a (n )\, n^{1-d}.
$$
Since for $n\geq N_0$, $u_n>0$, we deduce that $(U_n)$ is a positive decreasing sequence for $n\geq N_0$. Using again Bingham {\it et al.} (1987), we deduce that
$$
U_n \equin \frac{a_0\,(1-d)}{\Gamma(d)\, \Gamma(2-d)}\, L^{-1}_a\big (n\big )\, n^{-d}=\frac{a_0}{\Gamma(d)\, \Gamma(1-d)}\, L^{-1}_a\big (n\big )\, n^{-d}.
$$
To finish with, since $(U_n)$ is a positive decreasing sequence for $n \geq N_0$, we deduce:
$$
u_n=\frac{a_0\,d }{\Gamma(d)\, \Gamma(1-d)}\, L^{-1}_a\big (n\big )\,n^{-1-d},
$$
and this achieves the proof.
\end{proof}

\begin{proof}[Proof of Theorem \ref{Theo1}]
In the sequel, we will denote for any $t\in \N^*$ and $\theta \in \Theta$,
\begin{equation}\label{mtt}
\widetilde m_t(\theta)=m_t(\theta)-\widehat m_t(\theta)=\sum_{k=t}^{\infty} u_k(\theta)\, X_{t-k}.
\end{equation}
For a random variable $Z$ and $r\geq 1$, denote $\|Z\|_r=\big (\E \big [ |Z|^r \big ]\big )^{1/r}$.\\
~\\
\noindent {\bf 1.} Firstly we prove some useful inequalities. \\
~\\
From the Cauchy-Schwarz Inequality, for any $\theta \in \Theta$ and $t\in \Z$,
\begin{eqnarray*}
\big (m_t(\theta)\big )^2 &\leq& \Big (\sum_{k=1}^\infty \big |u_k(\theta) \big |\Big )\, \Big (   
\sum_{k=1}^\infty \big |u_k(\theta) \big |\, X_{t-k}^2 \Big ) \\
&\leq& \sup_{\theta \in \Theta} \Big \{ \sum_{k=1}^\infty \big |u_k(\theta) \big |\Big \} \,    \sup_{\theta \in \Theta} \Big \{
\sum_{k=1}^\infty \big |u_k(\theta) \big |\, X_{t-k}^2 \Big \} \\
\Longrightarrow ~\big \|\sup_{\theta \in \Theta}  \big |m_t(\theta)\big | \big \|_2^2 & \leq & 
\Big (\sup_{\theta \in \Theta} \Big \{ \sum_{k=1}^\infty \big |u_k(\theta) \big |\Big \}\Big )^2\, \big \| X_0\big \|_2^2<\infty,
\end{eqnarray*}
since $(u_k)$ follows \eqref{untheta}, $\Theta$ is a compact subset, $\theta \in \Theta \mapsto u_k(\theta)$ is a continuous function for any $k\geq 1$ and $d(\theta)\in (0,1/2)$. \\
Using the same inequalities we also obtain that there exists $C_2>0$ such that for any $t\geq 1$,
\begin{multline}\label{boundm}
\big \|\sup_{\theta \in \Theta} \big | \widehat m_t(\theta)\big | \big \|_2^2<\infty \quad \\
\mbox{and}\quad \big \|\sup_{\theta \in \Theta} \big | \widetilde m_t(\theta)\big | \big \|_2^2 \leq \Big (\sup_{\theta \in \Theta} \Big \{ \sum_{k=t+1}^\infty \big |u_k(\theta) \big |\Big \}\Big )^2\, \big \| X_0\big \|_2^2 \leq C_2 \, t^{-2\, \underline d},
\end{multline}
with $0 < \underline d < \inf_{\theta \in \Theta} d(\theta)$ from the condition \eqref{untheta} on $(u_n(\theta))$. \\ 

Finally with 
\begin{equation}\label{q}
\left \{ \begin{array}{ccc} q_t(\theta)&=&-\frac{1}{2}\Big (\log \big(\sigma^2\big ) + \frac{\big (X_{t}- m_t(\theta)\big)^{2}}{ \sigma^2}\Big )  \\
 \widehat q_t(\theta)&=&-\frac{1}{2}\Big (\log \big(\widehat \sigma^2\big ) + \frac{\big (X_{t}-\widehat m_t(\theta)\big)^{2}}{\sigma^2}\Big  ) \end{array} \right .,
\end{equation} 
we obtain from the previous bounds and $\sigma^2 \in [\sigma^2_m,\sigma^2_M]$ where $0<\sigma^2_m< \sigma^2_M$,
\begin{eqnarray}
\nonumber \sup_{\theta \in \Theta} \big |q_t(\theta) \big | & \leq &\sup_{\theta \in \Theta} \Big \{  \frac 1 { \sigma_m^2} \big ( X_t^2+m^2_t(\theta)\big )+\frac 1 2 \, \big | \log(\sigma_M^2)\big | \Big \}\\
\nonumber  \Longrightarrow ~~ \Big \| \sup _{\theta \in \Theta} \big |q_t(\theta) \big | \Big \|_1 & \leq & \frac 1 {\sigma_m^2} \big ( \| X_t\|_2^2+ \big \| \sup _{\theta \in \Theta} \big |m_t(\theta)\big |\big \|_2^2 \big ) +\frac 1 2 \, \big | \log(\sigma_M^2)\big | \\
& < & \infty. \label{inegq}
\end{eqnarray}
And to conclude with these preliminary bounds, using Cauchy-Schwarz and the triangular inequality, there exists $C>0$ such as for  $t\geq 1$,
\begin{eqnarray} 
\nonumber \Big \| \sup _{\theta \in \Theta} \big |q_t(\theta) - \widehat q_t(\theta)\big | \Big \|_1 &\leq &\frac 1 2 \, \Big \| \sup _{\theta \in \Theta} \big |2X_t +m_t(\theta)+ \widehat m_t(\theta)\big | \Big \|_2 \, \Big \| \sup _{\theta \in \Theta} \big | \widetilde m_t(\theta)\big | \Big \|_2 \\
\nonumber & & \hspace{-2cm}\leq \frac 1 2 \, \Big ( 2 \, \|X_0^2\|_2^2+ \big \|  \sup _{\theta \in \Theta} \big |m_t(\theta)\big | \big \|_2^2+ \big \|  \sup _{\theta \in \Theta} \big |\widehat m_t(\theta)\big | \big \|_2^2\Big ) \,\big ( C_2 \, t^{-2\, \underline d} \big )^{1/2} \\
& & \hspace{-2cm} \leq  C \,\,t^{- \underline d}.\label{inegqq}
\end{eqnarray}
\noindent {\bf 2.} From its AR$(\infty$) representation \eqref{a2}, and since $\|X_0\|_2<\infty$, then $(X_t)_{t\in \Z}$ is a second order ergodic stationary sequence (see Theorem 36.4 in Billingsley, 1995). But for any $\theta \in \Theta$, there exists $H^q_\theta:\R^\N \to \R$ such that
$$
q_t(\theta)=H^q_\theta \big ( (\varepsilon_{t-j})_{j\geq 0}\big ),
$$
with also $\E \big [\big | q_t(\theta) \big |\big ]<\infty$ from \eqref{inegq}. Then using Theorem 36.4 in Billingsley (1995),  $\big (q_t(\theta) \big )\big )_{t\in \Z}$ is an ergodic stationary sequence for any $\theta\in \Theta$ and therefore
$$
 I_n(\theta)\limiteasn \E \big [ q_0(\theta)\big ] \quad \mbox{for any $\theta \in \Theta$},
$$
with $I_n(\theta)$ defined in \eqref{I}. Moreover, since $\Theta$ is a compact set and since we have $\E \big [\sup_{\theta \in \Theta} \big | q_t(\theta) \big |\big ]<\infty$ from \eqref{inegq}, using Theorem 2.2.1. in Straumann (2005), we deduce that $\big (q_t(\theta) \big )\big )_{t\in \Z}$ also follows a uniform ergodic theorem  and we obtain 
\begin{equation}\label{convI}
\sup_{\theta \in \Theta} \big |  I_n(\theta)-\E \big [ q_0(\theta)\big ] \big | \limiteasn 0.
\end{equation}
~\\
Now, using $\widehat I_n(\theta)$ defined in \eqref{QLik}, we can write
\begin{equation}\label{II}
\sup_{\theta \in \Theta} \big | I_n(\theta)-\widehat  I_n(\theta) \big | \leq \frac 1 n \, \sum_{t=1}^n \sup_{\theta \in \Theta} \big | q_t(\theta)-\widehat  q_t(\theta) \big |.
\end{equation}
In Corollary 1 of Kounias and Weng (1969), it is established that for a $\L^1$ sequence of r.v. $(Z_t)_t$ and a sequence of positive real numbers $(b_n)_{n\in \N^*}$ such as $b_n\limiten \infty$, then $\sum_{t=1}^\infty \frac{\E\big [ |Z_t| \big ]}{b_t} <\infty$ implies $\frac 1 {b_n} \, \sum_{t=1}^n Z_t \limiteasn 0$. \\
Therefore, with $b_t=t$ and $Z_t=\sup_{\theta \in \Theta} \big | q_t(\theta)-\widehat  q_t(\theta) \big |$  for $t\in \N^*$, using the inequality  \eqref{inegqq}, 
\begin{multline*}
\sum_{t=1}^\infty \frac 1 t \, \E\big [\sup_{\theta \in \Theta} \big | q_t(\theta)-\widehat  q_t(\theta) \big |\big ] \leq C \, \sum_{t=1}^\infty t^{-\underline d -1} <\infty \quad \\
\Longrightarrow \quad \frac 1 n \, \sum_{t=1}^n \sup_{\theta \in \Theta} \big | q_t(\theta)-\widehat  q_t(\theta) \big | \limiteasn 0.
\end{multline*}
Then, using \eqref{II} and \eqref{convI}, we deduce:
\begin{equation}\label{convII}
\sup_{\theta \in \Theta} \big |  \widehat I_n(\theta)-\E \big [ q_0(\theta)\big ] \big | \limiteasn 0.
\end{equation} 
~\\
{\bf 3.} Finally, the same argument already detailed in the proof of Theorem 1 of Bardet and Wintenberger (2009) is used: $\theta \in \Theta \mapsto \E \big [ q_0(\theta)\big ]$ has a unique maximum reached in $\theta=\theta^*\in \Theta$ because it is assumed that if $u_n(\theta)=u_n(\theta')$ for all $n\in \N^*$ with $\theta=(\gamma,\sigma^2)$ and $\theta'=(\gamma',\sigma^{2})$, then $\theta=\theta'$. This property and the uniform almost sure consistency  \eqref{convII} lead to $\widehat \theta_n \limiteasn \theta^*$. 
\end{proof}

\begin{proof}[Proof of Theorem \ref{theo2}]
As a preamble to this proof, since $\widehat \theta_n \limiteasn \theta^*$ by Theorem \ref{Theo1}, we will be able to reduce the $\Theta$ domain. Let $\widetilde \Theta \subset \Theta$ be a compact set of $\R^p$ such that:
$$ 
\widetilde \Theta=\big\{\theta \in \Theta,~ 2d(\theta^*)-1/2<\inf_{\theta \in \widetilde \Theta} d(\theta)<d(\theta^*) \big \}.
$$
Note that $2d(\theta^*)-1/2<d(\theta^*)$, so it's still possible to determine $\widetilde \Theta$. \\
In the spirit of \eqref{QMLE}, let's define 
$$
\widetilde \theta_n=\text{Arg}\!\max_{\!\!\!\!\!\theta \in \widetilde \Theta} \widehat I_n(\theta).
$$
Using Theorem \ref{Theo1}, it is clear that $\widetilde \theta_n \limiteasn \theta^*$. Moreover, for all $x=(x_1,\ldots,x_p) \in \R^p$, 
\begin{eqnarray*}
\P\Big ( \sqrt n \, \big (\widehat \theta_n -\theta^* \big ) \cart_{j=1}^p (-\infty,x_j]\Big )&& \\
&&\hspace{-3cm}=\P\Big ( \sqrt n \, \big (\widehat \theta_n -\theta^* \big ) \in \cart_{j=1}^p (-\infty,x_j]~\big | ~\widehat \theta_n \in \widetilde \Theta\Big )\, \P\big ( \widehat \theta_n \in \widetilde \Theta\big ) \\
&&\hspace{-2.5cm}+ \P\Big ( \sqrt n \, \big (\widehat \theta_n -\theta^* \big )\in  \cart_{j=1}^p (-\infty, x_j]~\big | ~\widehat \theta_n \notin \widetilde \Theta\Big ) \, \P\big ( \widehat \theta_n \notin \widetilde \Theta\big ) \\
&&\hspace{-3cm}=\P\Big ( \sqrt n \, \big (\widetilde\theta_n -\theta^* \big ) \in \cart_{j=1}^p (-\infty,x_j]\Big )\, \P\big ( \widehat \theta_n \in \widetilde \Theta\big ) \\
&&\hspace{-2.5cm}+  \P\Big ( \sqrt n \, \big (\widetilde \theta_n -\theta^* \big ) \in \cart_{j=1}^p (-\infty, x_j]\Big ) \, \P\big ( \widehat \theta_n \notin \widetilde \Theta\big )
\end{eqnarray*}
Since $\widehat \theta_n \limiteasn \theta^*$ by Theorem \ref{Theo1} and therefore $\P\big ( \widehat \theta_n \notin \widetilde \Theta\big )\limiten 0$ because $\theta^* \in \widetilde \Theta$, it is clear that the asymptotic distribution of $\sqrt n \, \big (\widehat \theta_n -\theta^* \big )$ is the same as the one of $\sqrt n \, \big (\widetilde \theta_n -\theta^* \big )$. Consequently, throughout the rest of the proof, $\Theta$ will be replaced by $\widetilde \Theta$ and $\widehat \theta_n$ by $\widetilde \theta_n$. \\
~\\
In the sequel, for $\theta\in \widetilde \Theta$, we will denote $d=d(\theta)-\varepsilon$ and $d^*_+=d^*+\varepsilon$  where $d^*=d(\theta^*)$ is the unknown long-memory parameter, and we chose $\varepsilon>0$ such as $\varepsilon\leq \frac 1 6 \, \big ( 1-4d(\theta)+2d(\theta^*) \big)$. Hence, from the definition of $\widetilde \Theta$, $1-4d(\theta)+2d(\theta^*)>0$ and 
\begin{equation}\label{dd}
4 \,d^*_+-2\, d-1<0.
\end{equation} 
From Assumption {\bf (A)}, for any $\theta \in \widetilde \Theta$ and $t\in \Z$, $\partial_\theta m_t(\theta)$ and $\partial^2_\theta m_t(\theta)$ a.s. exist with 
$$
\partial_\theta m_t(\theta)=\sum_{k=1}^\infty \partial_\theta u_k(\theta) \, X_{t-k} ~~\mbox{and}~~ \partial^2_{\theta^2} m_t(\theta)=\sum_{k=1}^\infty \partial^2_{\theta^2} u_k(\theta) \, X_{t-k}.
$$
And the same for $\partial_\theta \widehat m_t(\theta)$, $\partial_\theta \widetilde m_t(\theta)$, $\partial^2_\theta \widehat m_t(\theta)$ and $\partial^2_\theta \widetilde m_t(\theta)$. However, note that for any $\theta \in \widetilde \Theta$, $(m_t(\theta))_t$, $(\partial_\theta m_t(\theta))_t$ and $(\partial^2_{\theta^2} m_t(\theta))_t$ are stationary processes while $(\widehat m_t(\theta))_t$, $(\widetilde m_t(\theta))_t$ and their derivatives are not.\\

Due to these results, for any $\theta \in \widetilde \Theta$:
\begin{eqnarray} \label{dqt}
\partial_\theta q_t(\theta)=
\Big ( \begin{array}{c}
\partial_\gamma q_t(\theta) \\
\partial_{\sigma^2} q_t(\theta) 
\end{array} \Big )
=\left ( \begin{array}{c} \frac 1 {\sigma^2}\, \partial_\gamma m_t(\theta)\, \big (X_t-m_t(\theta) \big ) \\
\frac 1 {2\, \sigma^4}\, \big ( \big (X_t-m_t(\theta) \big )^2-\sigma^2 \big ) 
\end{array}\right ),
\end{eqnarray}
and the same for $\partial_\theta \widehat q_t(\theta)$ by replacing $m_t(\theta)$ by $\widehat m_t(\theta)$. Once again for any $\theta \in \widetilde \Theta$, $(\partial_\theta q_t(\theta))_t$ is a stationary process, while $(\partial_\theta \widehat q_t(\theta))_t$ is not. Finally, for all $\theta\in \widetilde \Theta$, define
$$
\partial _\theta L_n(\theta)=\frac 1 n \, \sum_{t=1}^n \partial_\theta q_t(\theta) \quad\mbox{and} \quad \partial _\theta \widehat L_n(\theta)=\frac 1 n \, \sum_{t=1}^n \partial_\theta \widehat q_t(\theta).
$$
Following the same reasoning it can be shown that for any $t \in \Z$, $\theta \in \widetilde \Theta \mapsto q_t(\theta)$ and $\theta \in \widetilde \Theta \mapsto \widehat q_t(\theta)$ are a.s. ${\cal C}^2(\widetilde \Theta)$ functions and therefore the random matrices $ \partial^2 _{\theta^2} L_n(\theta)$ and $ \partial^2 _{\theta^2} \widehat L_n(\theta)$ a.s. exist.
~\\
The proof of Theorem \ref{theo2} will be decomposed in 3 parts:
\begin{enumerate}
\item First, as it was already established in Bardet and Wintenberger (2009), $(\partial_\theta  q_t(\theta^*))_t$ is a stationary ergodic martingale difference since with the $\sigma$-algebra ${\cal F}_t =\sigma\big \{ (X_{t-k})_{k\geq 1} \big \}$,
$$
\E \Big [ \partial_\theta  q_t(\theta^*)  ~\big | \, {\cal F}_t \Big ]=0,
$$
because $(X_t)$ is a causal process and $\varepsilon_t$ is independent of ${\cal F}_t$ and $\E \big [\varepsilon_0 ^2\big ]=1$.  \\
Now since  $\E \big [ \big \|\partial^{}_{\theta}  q_0(\theta^*) \big \|^2 \big ]<\infty$ from the same arguments as in the proof of the consistency of the estimator. 
Then the central limit for stationary ergodic martingale difference, Theorem 18.3 of Billingsley (1968)  can be applied and 
\begin{equation}\label{proof1}
\sqrt n \, \partial _\theta L_n(\theta^*) \limiteloin {\cal N} \big (0 \, , \, G^*\big ),
\end{equation}
since $\E \big [ \partial_\theta  q_0(\theta^*) \big ] =0$ and where $G^* :=\E \Big [ \partial_\theta q_0(\theta^*)  \times {}^t \big (  \partial_\theta q_0(\theta^*)\big )\Big ]$. 
\item We are going to prove that:
\begin{equation}\label{vardL}
n \, \E \Big [\sup_{\theta \in \widetilde \Theta}\big \| \partial_\theta \widehat{L}_n(\theta)-\partial_\theta {L}_n(\theta)\big \|^2 \Big ]\limiten 0.
\end{equation}
Using a line of reasoning already used in Beran and Sch\"utzner (2009, Lemma 1 and 2) and Bardet (2023, Lemma 5.1 3.), and derived from Parzen (1995, Theorem 3.B), there exists $C>0$ such that:
$$
\E \Big [\sup_{\theta \in \widetilde \Theta}\big \| \partial_\theta \widehat{L}_n(\theta)-\partial_\theta {L}_n(\theta)\big \|^2 \Big ] \leq C \, \sup_{\theta \in \widetilde \Theta} \E \Big [ \big \| \partial_\theta \widehat{L}_n(\theta)-\partial_\theta {L}_n(\theta)\big \|^2 \Big ],
$$
because we assumed that $\theta\to u_n(\theta)$ is a ${\cal C}^{p+1}(\widetilde \Theta)$ function and therefore $\partial_\theta \widehat{L}_n(\theta)-\partial_\theta {L}_n(\theta)$ is a ${\cal C}^{p}(\widetilde \Theta)$ function. \\
Then, for $\theta \in \widetilde \Theta$, 
$$
\partial_{\gamma}  q_t(\theta)- \partial_{\gamma} \widehat q_t(\theta)=\frac 1 {\sigma^2} \Big ( \partial_{\gamma} \widetilde m_t(\theta) \, \big (X_t -m_t(\theta)\big ) +\partial_{\gamma} \widehat m_t(\theta) \, \widetilde m_t(\theta) \Big ).
$$

As a consequence, for $\theta \in \widetilde \Theta$, 
\begin{multline}\label{decomp}
\!\! \!\! \! n \,\E \big [  \big \| \partial_\theta \widehat{L}_n(\theta)-\partial_\theta {L}_n(\theta)\big \|^2 \big ] \\
= \frac 1 {n \, \sigma^4}  \Big ( 2 \!\!\!\! \sum_{1 \leq s<t \leq n} \!\!\!\! \E \Big [  {}^t \Big ( \partial_{\gamma} \widetilde m_t(\theta) \, \big (X_t -m_t(\theta)\big ) +\partial_{\gamma} \widehat m_t(\theta) \, \widetilde m_t(\theta) \Big )\, \\
\times \Big ( \partial_{\gamma} \widetilde m_s(\theta) \, \big (X_s -m_s(\theta)\big ) +\partial_{\gamma} \widehat m_s(\theta) \, \widetilde m_s(\theta) \Big )\Big ] \\
~~~~ +  \sum_{t=1}^n \E \Big [{}^t \Big ( \partial_{\gamma} \widetilde m_t(\theta) \, \big (X_t -m_t(\theta)\big ) +\partial_{\gamma} \widehat m_t(\theta) \, \widetilde m_t(\theta) \Big )\, \\
\times \Big ( \partial_{\gamma} \widetilde m_t(\theta) \, \big (X_t -m_t(\theta)\big ) +\partial_{\gamma} \widehat m_t(\theta) \, \widetilde m_t(\theta) \Big ) \Big ] \Big )\\
=  \frac 1 {n \, \sigma^4}  \big (I_1+I_2 \big ).
\end{multline}
Concerning $I_1$, since $X_t=\sigma^*\, \varepsilon_t+m_t(\theta^*)$ and since $\varepsilon_t$ is independent to all the other terms because $s<t$, we deduce that  $ \big (X_t -m_t(\theta)\big )$ can be replaced by $n_t(\theta,\theta^*)= \big (m_t(\theta^*)-m_t(\theta) \big )$. As a consequence, after its expansion, $I_1$ can be written as a sum of $6$ expectations of products of $4$ linear combinations of $(\varepsilon_t)$.  Moreover, if for $j=1,\ldots,4$,  $Y^{(j)}_{t_j}=\sum_{k=0}^\infty \beta^{(j)}_k \, \xi_{t_j-k}$, where $t_1\leq t_2\leq t_3\leq t_4$, $(\beta^{(j)}_n)_{n\in \N}$ are $4$ real sequences and $(\xi_t)_{t\in \Z}$ is a white noise such as $\E[\xi_0^2]=1$ and $\E[\xi_0^4]=\mu_4< \infty$, then:
\begin{multline*}
\E \big [ \prod_{j=1}^4 Y^{(j)}_{t_j} \big ]=(\mu_4-3)\, \sum_{k=0}^\infty \beta^{(1)}_k \beta^{(2)}_{t_2-t_1+k}\beta^{(3)}_{t_3-t_1+k}\beta^{(4)}_{t_4-t_1+k} \\
+\E \big [Y^{(1)}_{t_1} Y^{(2)}_{t_2}\big ] \, \E \big [Y^{(3)}_{t_3} Y^{(4)}_{t_4}\big ] 
+ \E \big [Y^{(1)}_{t_1} Y^{(3)}_{t_3}\big ] \, \E \big [Y^{(2)}_{t_2} Y^{(4)}_{t_4}\big ] \\
+ \E \big [Y^{(1)}_{t_1} Y^{(4)}_{t_4}\big ] \, \E \big [  Y^{(2)}_{t_2} Y^{(3)}_{t_3} \big ].
\end{multline*}
Now, consider for example $Y^{(1)}_{t_1}=\partial_{\gamma} \widetilde m_s(\theta)$, $ Y^{(2)}_{t_2}= \big (X_s -m_s(\theta)\big )$, $Y^{(3)}_{t_3}=\partial_{\gamma} \widehat m_t(\theta) $ and $Y^{(4)}_{t_4}= \widetilde m_t(\theta)$.  From Lemma \ref{lem00} and for any used sequence $(\beta^{(j)}_k)_{k\in \N}$, there exists $C>0$ such as for any $k\in \N$:
\begin{multline*}
\big | \beta^{(1)}_k \big | \leq \frac C {s^d \, (k+1)^{1-d^*_+}}, ~\big | \beta^{(4)}_k \big | \leq \frac C {t^d \, (k+1)^{1-d^*_+}} \\
\mbox{and}~\max\big(\big | \beta^{(2)}_k\, , \, \big | \beta^{(3)}_k \big | \big )\leq  \frac C { (k+1)^{1-d^*_+}}.
\end{multline*}
As a consequence, with $s<t$,
\begin{multline}\label{mu4}
\Big | (\mu_4-3)\, \sum_{k=0}^\infty \beta^{(1)}_k \beta^{(2)}_{k}\beta^{(3)}_{t-s+k}\beta^{(4)}_{t-s+k}  \Big | \leq \frac C {s^d t^d} \, \sum_{k=1}^\infty \frac 1 {k^{2-2d^*_+}} \, \frac 1 {(k+t-s)^{2-2d^*_+}}\\ \leq  \frac C {s^d t^d(t-s)^{2-2d^*_+}}.
\end{multline}
And we obtain the same bound for any quadruple products appearing in $I_1$.  \\
~\\
Consider now the other terms of $I_1$. Using Lemmas \ref{lem2} and \ref{lem3}, we obtain for any $\theta\in \widetilde \Theta$ and $s<t$:
\begin{eqnarray*}
 \bullet \quad \Big |\E \big [Y^{(1)}_{t_1} Y^{(2)}_{t_2}\big ] \, \E \big [Y^{(3)}_{t_3} Y^{(4)}_{t_4}\big ]\Big |&=&\Big |\E \big [\partial_{\gamma} \widetilde m_s(\theta)\, \big (X_s -m_s(\theta)\big ) \big ] \, \E \big [ \partial_{\gamma} \widehat m_t(\theta) \, \widetilde m_t(\theta)\big ]\Big | \\
 & =&\Big | \E \big [\partial_{\gamma} \widetilde m_s(\theta)\, n_s(\theta,\theta^*)\big ) \big ]\Big | \, \Big |\E \big [\partial_{\gamma} \widehat m_t(\theta) \, \widetilde m_t(\theta)\big ]\Big | \\
& \leq & C\,  \frac 1 {s^{1+d-2d^*_+}}\, \frac 1 {t^{1+d-2d^*_+}}; \\
 \bullet \quad \Big |\E \big [Y^{(1)}_{t_1} Y^{(3)}_{t_3}\big ] \, \E \big [Y^{(2)}_{t_2} Y^{(4)}_{t_4}\big ]\Big |&=&\Big |\E \big [\partial_{\gamma} \widetilde m_s(\theta)\,  \partial_{\gamma} \widehat m_t(\theta)  \big ] \, \E \big [\big (X_s -m_s(\theta)\big ) \, \widetilde m_t(\theta)\big ]\Big | \\
 & =&\Big | \E \big [\partial_{\gamma} \widetilde m_s(\theta)\,\partial_{\gamma} \widehat m_t(\theta) \big ) \big ]\Big | \, \Big |\E \big [ n_s(\theta,\theta^*)\, \widetilde m_t(\theta)\big ]\Big | \\
& \leq & C\,  \Big ( \frac 1  {s^dt^{1-2d^*_+}}+ \frac {1} {s^{1+2d-2d^*_+}}\Big )\,\Big ( \frac 1 {t^{1+d} s^{-2d^*_+}} +\frac 1 {t^{1+2d-2d^*_+}}  \Big )\\
\bullet \quad \Big |\E \big [Y^{(1)}_{t_1} Y^{(4)}_{t_4}\big ] \, \E \big [Y^{(2)}_{t_2} Y^{(3)}_{t_3}\big ]\Big |&=&\Big |\E \big [\partial_{\gamma} \widetilde m_s(\theta)\, \widetilde m_t(\theta) \big ] \, \E \big [\big (X_s -m_s(\theta)\big ) \,  \partial_{\gamma} \widehat m_t(\theta)  \big ]\Big | \\
 & =&\Big | \E \big [\partial_{\gamma} \widetilde m_s(\theta)\, \widetilde m_t(\theta) \big ] \Big | \, \Big |\E \big [ n_s(\theta,\theta^*)\, \partial_{\gamma} \widehat m_t(\theta)\big ]\Big | \\
& \leq & C\,  \frac 1 {s^dt^{1-2d^*_++d}}\,\frac 1 {(t-s)^{1-2d^*_+} }
\end{eqnarray*}
Using these inequalities as well as \eqref{mu4}, we deduce from classical comparisons between sums and integrals:
\begin{eqnarray*}
&& \hspace{-1cm} \sum_{1 \leq s<t \leq n} \!\! \E \Big [  {}^t \Big ( \partial_{\gamma} \widetilde m_t(\theta) \, \big (X_t -m_t(\theta)\big ) \partial_{\gamma} \widehat m_s(\theta) \, \widetilde m_s(\theta) \Big )\Big ]  \\
 &\leq& C \!\! \sum_{1 \leq s<t \leq n} \!\!  \frac { \mu_4-3} {s^dt^d(t-s)^{2-2d^*_+}}+\Big ( \frac 1  {s^dt^{1-2d^*_+}}+ \frac {1} {s^{1+2d-2d^*_+}}\Big )\,\Big ( \frac 1 {t^{1+d} s^{-2d^*_+}} +\frac 1 {t^{1+2d-2d^*_+}}  \Big )
  \\
&&\hspace{3cm} +\frac 1 {s^{1+d-2d^*_+}}\, \frac 1 {t^{1+d-2d^*_+}} +\frac 1 {s^dt^{1-2d^*_++d}}\,\frac 1 {(t-s)^{1-2d^*_+} } \\
&\leq &C \Big ( \int_1^n x^{2d^*_+-1-2d} dx+\int_1^n \frac{dx}{x^{2+d-2d^*_+}} \int _1 ^x \frac{dy}{y^{d-2d^*_+}} \\
&&\hspace{1.5cm} +\int_1^n \frac{dx}{x^{2+2d-4d^*_+}} \int _1 ^x \frac{dy}{y^{d}}+\int_1^n \frac{dx}{x^{1+d}} \int _1 ^x \frac{dy}{y^{1+2d-4d^*_+}} \\
&&\hspace{1.5cm} +\int_1^n \frac{dx}{x^{1+2d-2d^*_+}} \int _1 ^x \frac{dy}{y^{1+2d-2d^*_+}} +\int_1^n \frac{dx}{x^{1+d-2d^*_+}} \int _1 ^x \frac{dy}{y^{1+d-2d^*_+}}  \\
&&\hspace{1.5cm}+ \int_1^n \frac{dx}{x^{1+d-2d^*_+}} \int _1 ^x \frac {dy} {y^d(x-y)^{1-2d^*_+} }  \Big ) \\
 &\leq &C \big (n^{2d^*_+-2d} + n^{4d^*_+-2d}+n^{4d^*_+-3d}+n^{4d^*_+-3d}+n^{4d^*_+-4d}+n^{4d^*_+-2d}+n^{4d^*_+-2d} \big ) \\
 &\leq&  C \,  n^{4d^*_+-2d}.
\end{eqnarray*}
We obtain exactly the same bounds if we consider the 3 others expectations, {\it i.e.}  $\E \Big [  {}^t \Big ( \partial_{\gamma} \widetilde m_t(\theta) \, \big (X_t -m_t(\theta)\big )\Big ) \partial_{\gamma} \widetilde m_s(\theta) \, \big (X_s -m_s(\theta)\big )\Big ]$,\\ $\E \Big [  {}^t \Big ( \partial_{\gamma} \widehat m_t(\theta) \, \widetilde m_t(\theta) \Big ) \partial_{\gamma} \widetilde m_s(\theta) \, \big (X_s -m_s(\theta)\big )\Big )\Big ]$ or $\E \Big [  {}^t \Big (\partial_{\gamma} \widehat m_t(\theta) \, \widetilde m_t(\theta) \Big )\partial_{\gamma} \widehat m_s(\theta) \, \widetilde m_s(\theta) \Big )\Big ]$. As a consequence, we finally obtain:
\begin{equation}\label{I1}
\frac 1 {\sigma^4 \, n} \, I_1 \leq C\, n^{4d^*_+-2d-1}\quad \mbox{for any $n\in \N^*$}. 
\end{equation}
Now consider the term $I_2$ in \eqref{decomp} and therefore the case $s=t$. For $Y^{(1)}_{t_1}=\partial_{\gamma} \widetilde m_t(\theta)$, $ Y^{(2)}_{t_2}= \big (X_t -m_t(\theta)\big )$, $Y^{(3)}_{t_3}=\partial_{\gamma} \widehat m_t(\theta) $ and $Y^{(4)}_{t_4}= \widetilde m_t(\theta)$, and the coefficient $(\beta^{(j)}_k)$ defined previously, we obtain:
\begin{multline}\label{I21}
\Big | (\mu_4-3)\, \sum_{k=0}^\infty \beta^{(1)}_k \beta^{(2)}_{k}\beta^{(3)}_{k}\beta^{(4)}_{k}  \Big | 
\leq C \, \sum_{k=1}^\infty \frac 1 {t^{2d}} \, \frac 1 {k^{4-4d^*_+}} \leq C \,\frac 1 {t^{2d}}.
\end{multline}
Moreover, using the same inequalities as in the case $s<t$, we obtain:
\begin{eqnarray*}
 \bullet \quad \Big |\E \big [Y^{(1)}_{t_1} Y^{(2)}_{t_2}\big ] \, \E \big [Y^{(3)}_{t_3} Y^{(4)}_{t_4}\big ]\Big |&\leq& C\, \frac 1 {t^{2+2d-4d^*_+}}; \\
 \bullet \quad \Big |\E \big [Y^{(1)}_{t_1} Y^{(3)}_{t_3}\big ] \, \E \big [Y^{(2)}_{t_2} Y^{(4)}_{t_4}\big ]\Big |
& \leq & C\,   \frac 1  {t^{2+2d-4d^*_+}}\\
\bullet \quad \Big |\E \big [Y^{(1)}_{t_1} Y^{(4)}_{t_4}\big ] \, \E \big [Y^{(2)}_{t_2} Y^{(3)}_{t_3}\big ]\Big |
& \leq & C\,  \frac 1 {t^{1-2d^*_++2d}}.
\end{eqnarray*}
Therefore, 
\begin{multline*}
 \sum_{t=1}^ n \E \Big [  {}^t \Big ( \partial_{\gamma} \widetilde m_t(\theta) \, \big (X_t -m_t(\theta)\big ) \partial_{\gamma} \widehat m_t(\theta) \, \widetilde m_t(\theta) \Big )\Big ] \\
  \leq C   \sum_{t=1}^ n \frac { \mu_4-3}{t^{2d}}+ \frac 1 {t^{1-2d^*_++2d}} \leq   C \, n^{1-2d}.
\end{multline*} 
As a consequence, we finally obtain that there exists $C>0$ such that:
\begin{equation}\label{I2}
\frac 1 {\sigma^4 \, n} \, I_2 \leq C\, n^{-2d}\quad \mbox{for any $n\in \N^*$.}
\end{equation}
Therefore, from \eqref{I1} and \eqref{I2}, we deduce that there exists $C>0$ such that for any $n\in \N^*$:
\begin{equation}\label{majo}
n \, \E \big [ \big \| \partial_\theta \widehat{L}_n(\theta)-\partial_\theta {L}_n(\theta)\big \|^2 \big ]\leq C\, \big (n^{-2d}+n^{4d^*_+-2d-1}\big ) \limiten 0,
\end{equation}
from \eqref{dd}. 
\item For $\theta\in \widetilde \Theta$ and $n \in \N^*$, since $ \partial^2 _{\theta^2} \widehat L_n(\theta)$ is a.s. a ${\cal C}^2(\widetilde \Theta)$ function, the Taylor-Lagrange expansion implies:
$$
\sqrt n\,\partial_\theta \widehat{L}_n(\theta^{*})=\sqrt n \, \partial_\theta \widehat{L}_n(\widetilde \theta_n)+\partial^{2}_{\theta^2} \widehat{L}_n(\bar{\theta}_{n}) \times \sqrt n \, (\theta^{*} - \widetilde{\theta}_{n}) 
$$
where $\bar{\theta}_{n}=c\,\widetilde{\theta}_{n}+(1-c)\,\theta^{*}$ and $0<c<1.$ But $\partial_\theta \widehat{L}_n(\widetilde \theta_n)=0$ because $\widetilde \theta_n$ is the unique local extremum of $\theta \to \widehat{L}_n(\theta)$. Therefore, 
\begin{equation}\label{proof2}
\sqrt n\,\partial_\theta \widehat{L}_n(\theta^{*})= \partial^{2}_{\theta^2} \widehat{L}_n(\bar{\theta}_{n}) \times \sqrt n \, (\theta^{*} - \widetilde{\theta}_{n}).
\end{equation}
Now,  $\E \big [ \big \|\partial^{2}_{\theta^2}  q_0(\theta) \big \| \big ]<\infty$ from the same arguments as in the proof of the consistency of the estimator, and  using Theorem 36.4 in Billingsley (1995),  $\big (\partial^{2}_{\theta^2}  q_t(\theta) \big )\big )_{t\in \Z}$ is an ergodic stationary sequence for any $\theta\in \widetilde \Theta$. Moreover $\bar \theta_n \limiteasn \theta^*$ since $\widetilde \theta_n \limiteasn \theta^*$. Hence:
$$
\partial^{2}_{\theta^2} {L}_n(\bar{\theta}_{n}) \limiteasn \E \big [ \partial^{2}_{\theta^2}  q_0(\theta) \big ]=F(\theta^*).
$$
Moreover, using the same arguments as in Lemma 4 of \cite{BW}, we have:
\begin{equation}\label{F2}
\sup_{\theta \in \widetilde \Theta} \Big \| \partial^{2}_{\theta^2} {L}_n(\theta)- \partial^{2}_{\theta^2} \widehat {L}_n(\theta)\Big \| \limiteproban 0 \quad \Longrightarrow \quad \partial^{2}_{\theta^2} \widehat  {L}_n(\bar{\theta}_{n}) \limiteproban F(\theta^*). 
\end{equation}
Usual calculations show that:
\begin{multline*}
F(\theta^*)=- \left ( \begin{array}{cc}M^* & 0 \\ 0 & \frac 1 {2 \, \sigma^{*4}} \end{array}\right )\quad \mbox{and}\quad G(\theta^{*})=\left ( \begin{array}{cc}M^* & 0 \\ 0 & \frac {\mu_4^*-1} {4 \, \sigma^{*4}} \end{array}\right ),\\
\mbox{with}\quad M^*=\frac 1 {\sigma^{*2}} \ \sum_{k=1}^\infty \sum_{\ell=1}^\infty \partial_\gamma u_k((\gamma^*,0))\, {}^t \big (\partial_\gamma u_\ell((\gamma^*,0)) \big )\, r_X(\ell-k)
\end{multline*}
where $G(\theta^{*})=\E \left[ \partial_\theta q_{0}(\theta^{*}) \, {}^t \partial_\theta q_{0}(\theta^{*}) \right]$ has already been defined in \eqref{proof1}. \\
Thanks to the formula for $M^*$, we can deduce that $F^*$ is invertible. Indeed, $M^*$ is invertible if and only if $\E \left[ \partial_\theta q_{0}(\theta^{*}) \, {}^t \partial_\theta q_{0}(\theta^{*}) \right]$ is invertible and therefore if and only if for all $v\in \R^{p-1}$, ${}^t v \, \E \left[ \partial_\gamma q_{0}(\theta^{*}) \, {}^t \partial_\theta q_{0}(\theta^{*}) \right]\, v=\E \left[ \big ({}^t v \,\partial_\gamma q_{0}(\theta^{*}) \big )^2 \right]=0$ or ${}^t v \,\partial_\gamma q_{0}(\theta^{*}) =0~a.s.$ implies $v=0$. Or, pour $v\in \R^{p-1}$,
\begin{eqnarray*}
 {}^t v \,\partial_\gamma q_{0}(\theta^{*})=0~~a.s. 
& \Longrightarrow & \frac 1 {\sigma^{*2}} \, \varepsilon_0 \, \sum_{k=1}^\infty {}^t v \,\partial _\gamma  u_k(\theta^*) \, X_{-k}=0~ ~a.s. \\
&& \hspace{-2.5cm} \Longrightarrow  \sum_{k=1}^\infty {}^t v \,\partial _\gamma  u_k(\theta^*) \, X_{-k}=0~ ~a.s.~~\mbox{($\varepsilon_0$ is independent to ${\cal F}_0$)} \\
&&\hspace{-2.5cm} \Longrightarrow {}^t v \,\partial _\gamma  u_k(\theta^*)=0\quad\mbox{for all $k\in \N^*$} \\
&& \hspace{-2.5cm} \Longrightarrow v=0\quad \mbox{from \eqref{Var}}.
\end{eqnarray*}
Now, from \eqref{proof1} and \eqref{majo}, we deduce that:
$$
\sqrt n\,\partial_\theta \widehat{L}_n(\theta^{*}) \limiteloin {\cal N} \big ( 0\, , \, G(\theta^{*}) \big),
$$
and since $F(\theta^*)$ is a definite negative matrix, from \eqref{proof2} we deduce that 
\begin{equation}\label{TLC}
\sqrt n \, \big (\widetilde \theta_n -\theta^*  \big )\limiteloin {\cal N} \big ( 0 \, , \,F(\theta^{*})^{-1}\,G(\theta^{*})\, F(\theta^{*})^{-1}  \big ).
\end{equation} 
Finally, from the previous computations of $G(\theta^{*})$ and $F(\theta^{*})$, we deduce \eqref{TLC2}.
\end{enumerate}
\end{proof}
\subsection{Proofs of additional lemmas}
\begin{lem}\label{lem00}
Under the assumptions of Theorem \ref{Theo1}, for any $\theta \in \Theta$ and $t\in \Z$ or $t\in \N^*$, with $m_t(\theta)$, $\widehat m_t(\theta)$ and $\widetilde m_t(\theta)$ respectively defined in \eqref{mt}, \eqref{mth} and \eqref{mtt}, we have:
$$
m_t(\theta)=\sum_{k=1}^\infty \alpha_k(\theta,\theta^*)\, \varepsilon_{t-k},~ \widehat m_t(\theta)=\sum_{k=1}^\infty \widehat \alpha_{k,t}(\theta,\theta^*)\, \varepsilon_{t-k}~\mbox{and}~ \widetilde m_t(\theta)=\sum_{k=0}^\infty \widetilde \alpha_{k,t}(\theta,\theta^*)\, \varepsilon_{-k},
$$
where there exists $C>0$ such as for any $k \geq 1$ and $t \in \N^*$,
$$
\max \big (\big |\alpha_k(\theta,\theta^*) \big |\, , \,\big |\widehat \alpha_{k,t}(\theta,\theta^*) \big | \big ) \leq  \frac C {k^{1-d^*_+}}\quad \mbox{and}\quad 
\big |\widetilde \alpha_{k,t}(\theta,\theta^*)\big |  \leq  \frac C {t^d \, k^{1-d^*_+}}.
$$
Moreover, under the assumptions of Theorem \ref{theo2}, the same properties also hold for $\partial_\theta m_t(\theta)$, $\partial_\theta \widehat m_t(\theta)$ and $\partial_\theta \widetilde m_t(\theta)$.
\end{lem}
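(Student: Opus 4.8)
The plan is to insert the one-sided moving-average representation \eqref{linlin} of each $X_{t-j}$ into the definitions \eqref{mt}, \eqref{mth} and \eqref{mtt}, to reorder the resulting double series by collecting the coefficient of each innovation, and then to control all these coefficients through a single convolution estimate. Writing $X_{t-j}=\sum_{i=0}^\infty a_i(\theta^*)\,\varepsilon_{t-j-i}$ and substituting into $m_t(\theta)=\sum_{j\geq 1}u_j(\theta)\,X_{t-j}$, the coefficient of $\varepsilon_{t-k}$ is $\alpha_k(\theta,\theta^*)=\sum_{j=1}^k u_j(\theta)\,a_{k-j}(\theta^*)$, and likewise $\widehat\alpha_{k,t}=\sum_{j=1}^{\min(k,t-1)}u_j(\theta)\,a_{k-j}(\theta^*)$ for $\widehat m_t$. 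In $\widetilde m_t(\theta)=\sum_{j\geq t}u_j(\theta)\,X_{t-j}$ every innovation carries a nonpositive index, so setting $j=t+r$ the coefficient of $\varepsilon_{-k}$ is $\widetilde\alpha_{k,t}=\sum_{r=0}^{k}u_{t+r}(\theta)\,a_{k-r}(\theta^*)$. Each rearrangement is legitimate in $L^2$: one has $\sum_j|u_j(\theta)|<\infty$ by \eqref{untheta} and $\sup_j\|X_{t-j}\|_2<\infty$, while the reordered series is square-summable by the bounds below (using $d^*_+<1/2$), so the partial sums and the rearranged series share the same $L^2$ limit.

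The one estimate I need is that a convolution with one exponent above $1$ and one below $1$ is governed by the heavier tail. Here $|u_j(\theta)|\leq C\,j^{-1-d}$ (valid with the $d=d(\theta)-\varepsilon$ of the proof of Theorem \ref{theo2}, since $L_\theta$ is slowly varying, by a Potter bound) and $|a_i(\theta^*)|\leq C\,(1+i)^{-(1-d^*_+)}$ by \eqref{linlin}. Splitting the sum defining $\alpha_k$ at $j=k/2$, the range $j\leq k/2$ contributes at most $(k/2)^{-(1-d^*_+)}\sum_{j\geq1}|u_j|\leq C\,k^{-(1-d^*_+)}$, while the range $j>k/2$ uses $|u_j|\leq C\,k^{-1-d}$ together with $\sum_{s\leq k/2}|a_s|\leq C\,k^{d^*_+}$ and yields $C\,k^{-1-d+d^*_+}\leq C\,k^{-(1-d^*_+)}$ because $d>0$. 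Hence $|\alpha_k(\theta,\theta^*)|\leq C\,k^{-(1-d^*_+)}$, and since $\widehat\alpha_{k,t}$ is obtained by truncating the same sum its modulus is bounded by $\sum_{j=1}^k|u_j|\,|a_{k-j}|$, whence the identical bound.

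The main obstacle is the bound for $\widetilde\alpha_{k,t}$, for which one must extract the extra decay $t^{-d}$ expressing the fact that $\widetilde m_t$ is a remainder tail. I split again at $r=k/2$. For $r\leq k/2$ one has $|a_{k-r}|\leq C\,k^{-(1-d^*_+)}$ and $\sum_{r\geq0}|u_{t+r}|\leq\sum_{j\geq t}|u_j|\leq C\,t^{-d}$, so this part is at most $C\,t^{-d}\,k^{-(1-d^*_+)}$. For $r>k/2$ the decisive observation is $t+r\geq\tfrac12(t+k)$, which gives $|u_{t+r}|\leq C\,(t+k)^{-1-d}\leq C\,k^{-1}t^{-d}$; combined with $\sum_{s\leq k/2}|a_s|\leq C\,k^{d^*_+}$ this part is again at most $C\,t^{-d}\,k^{-(1-d^*_+)}$. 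Reading $k^{1-d^*_+}$ as $(1+k)^{1-d^*_+}$ absorbs the boundary index $k=0$ consistently with \eqref{linlin}, so altogether $|\widetilde\alpha_{k,t}|\leq C\,t^{-d}\,k^{-(1-d^*_+)}$.

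Finally, the assertions for $\partial_\theta m_t$, $\partial_\theta\widehat m_t$ and $\partial_\theta\widetilde m_t$ follow verbatim: differentiating the series term by term merely replaces $u_j(\theta)$ by $\partial_\theta u_j(\theta)$, and \eqref{Diff} of Assumption \textbf{(A)} provides $\|\partial_\theta u_j(\theta)\|\leq C\,j^{-1-d}$ with the very same exponent. The coefficients of the derivatives are therefore convolutions of exactly the same type and obey the same estimates, which completes the proof.
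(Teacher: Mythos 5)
Your proof is correct and follows essentially the same route as the paper's: substitute the moving-average representation $X_{t-j}=\sum_{i\geq 0}a_i(\theta^*)\varepsilon_{t-j-i}$ into \eqref{mt}, \eqref{mth}, \eqref{mtt}, collect the coefficient of each innovation, and bound the resulting convolutions $\sum_j u_j a_{k-j}$ by splitting at $k/2$, with the derivative statements handled identically via \eqref{Diff}. In fact you supply details the paper compresses into ``using the same kind of decomposition'' --- notably the $L^2$ justification of the rearrangement and the key inequality $t+r\geq\tfrac12(t+k)$ extracting the factor $t^{-d}$ for $\widetilde\alpha_{k,t}$ --- and these are carried out correctly.
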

\begin{proof}
We know that $X_t=\sum_{\ell=0}^\infty a_\ell(\theta^*)\, \varepsilon_{t-\ell}$ for any $t\in \Z$. Then,
\begin{eqnarray*}
m_t(\theta)&=&\sum_{k=1}^\infty\sum_{\ell=0}^\infty  u_k(\theta)a_\ell(\theta^*)\, \varepsilon_{t-k-\ell}=\sum_{j=1}^\infty \Big ( \sum_{k=1}^j u_k(\theta)a_{j-k}(\theta^*)\Big ) \,\varepsilon_{t-j}\\
&& \hspace{6cm} =\sum_{j=1}^\infty \alpha_j(\theta,\theta^*)\, \varepsilon_{t-j} \\
\widehat m_t(\theta)&=&\sum_{k=1}^{t-1} \sum_{\ell=0}^\infty  u_k(\theta)a_\ell(\theta^*)\, \varepsilon_{t-k-\ell}=\sum_{j=1}^\infty \Big (\!\!\!\! \sum_{k=1}^{\min(j\,,\, t-1)} \!\!\!\!\!\!\!\!u_k(\theta)a_{j-k}(\theta^*)\Big ) \,\varepsilon_{t-j}\\
&& \hspace{6cm} =\sum_{j=1}^\infty \widehat\alpha_{j,t}(\theta,\theta^*)\, \varepsilon_{t-j}  \\
\widetilde m_t(\theta)&=&\sum_{k=t}^{\infty} \sum_{\ell=0}^\infty  u_k(\theta)a_\ell(\theta^*)\, \varepsilon_{t-k-\ell} =\sum_{j=0}^\infty \Big ( \sum_{k=0}^j u_{t+k}(\theta)a_{j-k}(\theta^*)\Big ) \,\varepsilon_{t-j} \\
&& \hspace{6cm}=\sum_{j=0}^\infty \widetilde \alpha_{j,t}(\theta,\theta^*)\, \varepsilon_{t-j} 
\end{eqnarray*}
As a consequence, using $\big |a_\ell(\theta^*)\big |\leq C \, \ell ^{d^*_+-1}$  and $\big |u_\ell(\theta)\big |\leq C \, \ell ^{-d-1}$ for any $\ell\in \N^*$, we obtain:
\begin{eqnarray*}
\big |\alpha_j(\theta,\theta^*)\big | & \leq & C \, \sum_{k=1}^j \frac 1 {k^{1+d}} \, \frac 1 {(1+j-k)^{1-d^*_+}} \\
 & \leq & C \,\Big ( \frac 1 {(j/2)^{1-d^*_+}}\,  \sum_{k=1}^{j/2} \frac 1 {k^{1+d}}+ \frac 1 {(j/2)^{1+d}}\, \sum_{k=j/2} ^j \frac 1 {(1+j-k)^{1-d^*_+}} \Big ) \\
 & \leq & \frac C {j^{1-d^*_+}}.
\end{eqnarray*}
Using the same kind of decomposition, we obtain the other bounds.
\end{proof}

\begin{lem}\label{lem1}
For any $\alpha>1$, $\beta\in (0,1)$, there exists $C>0$ such as for any  $1\leq a$, 
\begin{eqnarray*}
&&I_\alpha(a)= \sum_{k=1}^\infty \frac 1 { k^\alpha \, (k+a)^\alpha} \leq \frac C {a^\alpha} \\ 
&&I_\alpha(a,b)= \sum_{k=1}^\infty \frac 1 {(k+a)^\alpha \, (k+b)^\alpha} \leq \frac C {a^{\alpha-1}\, b^\alpha} \quad\mbox{for any $b>a\geq 1$}\\
&& J_{\alpha,\beta}(0,a)=\sum_{k=1}^\infty \frac 1 {(k+a)^\alpha \, k^\beta } \leq \frac C {a^{\alpha+\beta-1}} \\
&& J_{\alpha,\beta}(a,0)=\sum_{k=1}^\infty \frac 1 {k^\alpha \, (k+a)^\beta} \leq \frac C {a^{\beta}} \\
&& J_{\alpha,\beta}(a,b)=\sum_{k=1}^\infty \frac 1 {(k+a)^\beta \, (k+b)^\alpha} \leq \frac C {a^{\beta}b^{\alpha-1}} \min \big (1\, , \,  \frac a b\big )  \quad\mbox{for any $b\geq 1$}
\end{eqnarray*}
\end{lem}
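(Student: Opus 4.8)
The plan is to prove all five inequalities with one elementary toolkit: bound the slowly-decaying factor of the summand below by its value at the dominant scale, and then compare the surviving one-factor sum with an integral. The two facts I keep reusing are that $\alpha>1$ makes the tails $\sum_{k>a}k^{-\alpha}$ converge and that $\beta\in(0,1)$ makes partial sums grow like a power, $\sum_{1\le k\le a}k^{-\beta}\le C\,a^{1-\beta}$; I also use $\sum_{k\ge1}(k+a)^{-\alpha}\le\int_0^\infty(x+a)^{-\alpha}\,dx=(\alpha-1)^{-1}a^{1-\alpha}$ and $\zeta(\alpha)=\sum_{k\ge1}k^{-\alpha}<\infty$.

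Three of the bounds need no splitting. For $I_\alpha(a)$, since $k+a\ge a$ I have $(k+a)^{-\alpha}\le a^{-\alpha}$, so $I_\alpha(a)\le a^{-\alpha}\zeta(\alpha)$. For $J_{\alpha,\beta}(a,0)$ the same trick with $(k+a)^{-\beta}\le a^{-\beta}$ gives $J_{\alpha,\beta}(a,0)\le a^{-\beta}\zeta(\alpha)$. For $I_\alpha(a,b)$ with $b>a$, I bound $(k+b)^{-\alpha}\le b^{-\alpha}$ and then the residual sum by the integral above, obtaining $I_\alpha(a,b)\le C\,a^{-(\alpha-1)}b^{-\alpha}$.

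The estimate $J_{\alpha,\beta}(0,a)$ needs a single split at $k\simeq a$. On $\{k\le a\}$ I use $(k+a)^{-\alpha}\le a^{-\alpha}$ together with $\sum_{k\le a}k^{-\beta}\le C\,a^{1-\beta}$, which contributes $C\,a^{1-\beta-\alpha}$; on $\{k>a\}$ I use $k+a\ge k$ so the summand is at most $k^{-\alpha-\beta}$ and the tail is $\le C\,a^{1-\alpha-\beta}$. Both pieces are of order $a^{-(\alpha+\beta-1)}$, which is the claim.

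I expect the real work to be in $J_{\alpha,\beta}(a,b)$, and this is the main obstacle, because it needs a case split according to which of $a,b$ is the dominant scale, and the minimum in the statement records the two outcomes. One branch is valid for all $a,b\ge1$: from $(k+a)^{-\beta}\le a^{-\beta}$ and the integral bound I get $J_{\alpha,\beta}(a,b)\le a^{-\beta}\sum_{k\ge1}(k+b)^{-\alpha}\le C\,a^{-\beta}b^{-(\alpha-1)}$, which is the value when the minimum equals $1$. In the complementary regime $a<b$ I split at $k\simeq b$: on $\{k\le b\}$ I combine $(k+b)^{-\alpha}\le b^{-\alpha}$ with $\sum_{k\le b}(k+a)^{-\beta}\le\int_0^b(x+a)^{-\beta}\,dx\le C\,b^{1-\beta}$ (this is where $a\le b$ enters), and on $\{k>b\}$ I use $k+a\ge k$ and $k+b\ge k$ to bound the tail by $\sum_{k>b}k^{-\alpha-\beta}\le C\,b^{1-\alpha-\beta}$; both pieces are of order $b^{-(\alpha+\beta-1)}$. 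Writing $b^{-(\alpha+\beta-1)}=a^{-\beta}b^{-(\alpha-1)}(a/b)^{\beta}$ exhibits the decaying branch of the minimum. The only delicate point is the uniform control of $\sum_{k\le b}(k+a)^{-\beta}$ over all $a\le b$ and the careful tracking of the exponents across the split at $k\simeq b$; once the integral-comparison estimates are organized, everything else is routine.
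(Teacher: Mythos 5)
The paper states Lemma \ref{lem1} without any proof at all (it is the only lemma in Section \ref{proofs} given no argument), so there is nothing to compare against: your sum--integral comparison is the natural and standard route, and it is essentially what the paper implicitly takes for granted. Your treatment of the first four bounds is correct and complete: $I_\alpha(a)\le \zeta(\alpha)a^{-\alpha}$, $I_\alpha(a,b)\le (\alpha-1)^{-1}a^{1-\alpha}b^{-\alpha}$, the split at $k\simeq a$ for $J_{\alpha,\beta}(0,a)$, and $J_{\alpha,\beta}(a,0)\le \zeta(\alpha)a^{-\beta}$ all hold with constants depending only on $\alpha,\beta$, as required.

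For the fifth bound there is, however, a genuine problem --- though it lies in the statement rather than in your estimates. Your two-regime computation correctly yields $J_{\alpha,\beta}(a,b)\le C\,b^{1-\alpha-\beta}$ for $1\le a\le b$, but your closing sentence misidentifies what this gives: $b^{1-\alpha-\beta}=a^{-\beta}b^{1-\alpha}(a/b)^{\beta}$, and since $\beta\in(0,1)$ and $a/b\le 1$ one has $(a/b)^{\beta}\ge a/b$, strictly so when $a<b$; thus you have proved the bound with $\min\big(1,(a/b)^{\beta}\big)$, which is \emph{weaker} than the stated $\min\big(1,a/b\big)$, and the gap cannot be closed because the stated inequality is false. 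Indeed, keeping only the terms $b\le k\le 2b$ gives $J_{\alpha,\beta}(a,b)\ge b\,(2b+a)^{-\beta}(3b)^{-\alpha}\ge c\,b^{1-\alpha-\beta}$ whenever $a\le b$; taking $a=1$ fixed, the stated right-hand side is $C\,b^{-\alpha}$ and $b^{1-\alpha-\beta}/b^{-\alpha}=b^{1-\beta}\to\infty$, so no constant works. Your bound is in fact sharp, so the exponent on the minimum in the lemma should read $\beta$ (equivalently $J_{\alpha,\beta}(a,b)\le C\max(a,b)^{-\beta}\,b^{1-\alpha}$), and one can check this corrected form is all the paper needs: for instance, at the end of the proof of Lemma \ref{lem2}, $t^{-d}\,J_{1+d,1-2d^*_+}(t-s,t)\le C\,t^{2d^*_+-1-2d}\le C\,s^{-d}\,t^{-(1+d-2d^*_+)}$ since $s\le t$, which is exactly the stated conclusion, and the analogous substitutions in Lemma \ref{lem3} still produce the $n^{4d^*_+-2d}$ bound used in the proof of Theorem \ref{theo2}. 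So: your proof is correct and supplies the proof the paper omits, provided you replace your final sentence by the observation that the minimum must carry the exponent $\beta$, together with the counterexample showing the statement as printed cannot hold.
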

\begin{lem} \label{lem2}
Under the assumptions of Theorem \ref{Theo1}, there exists $C>0$ such as for any $\theta \in \Theta$ and $1 \leq s \leq t\leq n$,
\begin{equation}
\big | \E \big [ \widetilde m_s(\theta)\, \widetilde m_t(\theta)\big ] \big | \leq \frac C {s^dt^{1-2d^*_++d}}. 
\end{equation}
\end{lem}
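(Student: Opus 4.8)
The plan is to reduce the statement to a purely deterministic double-sum estimate and then invoke Lemma~\ref{lem1}. Since $\widetilde m_s(\theta)=\sum_{l\geq s}u_l(\theta)\,X_{s-l}$ and $\widetilde m_t(\theta)=\sum_{k\geq t}u_k(\theta)\,X_{t-k}$ are both measurable with respect to $\sigma\{(X_j)_{j\leq 0}\}$, I would expand the product, take expectations term by term (the double series converges absolutely because $\sum_k|u_k(\theta)|<\infty$ by \eqref{untheta} and $r_X$ is bounded), and use $\E[X_{t-k}X_{s-l}]=r_X\big((t-k)-(s-l)\big)$. After the reindexing $k=t+a$, $l=s+b$ with $a,b\geq 0$, this gives
\[
\E\big[\widetilde m_s(\theta)\,\widetilde m_t(\theta)\big]=\sum_{a=0}^{\infty}\sum_{b=0}^{\infty}u_{t+a}(\theta)\,u_{s+b}(\theta)\,r_X(b-a),
\]
which is the same quantity one obtains from the moving-average representation of Lemma~\ref{lem00}.

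Next I would insert the uniform bounds $|u_{t+a}(\theta)|\leq C\,(t+a)^{-1-d}$ and $|u_{s+b}(\theta)|\leq C\,(s+b)^{-1-d}$ (from \eqref{untheta}, with the convention $d=d(\theta)-\varepsilon$ used throughout the proof of Theorem~\ref{theo2}) together with the long-memory covariance bound $|r_X(b-a)|\leq C\,(1+|a-b|)^{-(1-2d^*_+)}$ from \eqref{cov2}. This reduces the claim to showing
\[
\sum_{a=0}^{\infty}\frac{1}{(t+a)^{1+d}}\,\Sigma(a)\leq \frac{C}{s^{d}\,t^{\,1-2d^*_++d}},\qquad \Sigma(a):=\sum_{b=0}^{\infty}\frac{1}{(s+b)^{1+d}\,(1+|a-b|)^{1-2d^*_+}}.
\]

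For the inner sum I would prove $\Sigma(a)\leq C\,s^{-d}\,(1+a)^{-(1-2d^*_+)}$ by splitting the range of $b$ into $b\leq a/2$, $a/2<b\leq 2a$ and $b>2a$: in the two outer regions $|a-b|$ is comparable to $\max(a,b)$, so the factor $(1+|a-b|)^{-(1-2d^*_+)}$ can be pulled out and the remaining sum $\sum_b(s+b)^{-1-d}$ is $O(s^{-d})$, while the middle region is controlled by $(s+a)^{-1-d}\sum_{m\leq a}(1+m)^{-(1-2d^*_+)}=O\big((s+a)^{-1-d}a^{2d^*_+}\big)$, which is again dominated by $s^{-d}(1+a)^{-(1-2d^*_+)}$ after distinguishing $a\leq s$ and $a>s$; these are exactly the elementary estimates collected in Lemma~\ref{lem1}. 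Feeding this bound into the outer sum and applying $J_{\alpha,\beta}(0,t)$ from Lemma~\ref{lem1} with $\alpha=1+d>1$ and $\beta=1-2d^*_+\in(0,1)$ yields $\sum_{a\geq0}(t+a)^{-1-d}(1+a)^{-(1-2d^*_+)}\leq C\,t^{-(\alpha+\beta-1)}=C\,t^{-(1-2d^*_++d)}$, and the claimed bound follows.

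The main obstacle is the inner sum $\Sigma(a)$: because the autocovariance kernel decays only like $(1+|a-b|)^{-(1-2d^*_+)}$ with $1-2d^*_+\in(0,1)$, the diagonal region $b\approx a$ is not summable on its own and must be balanced against the decay of $(s+b)^{-1-d}$. This is precisely where $d^*_+<1/2$ is used and where the exponent $1-2d^*_++d$ is produced; a naive bound replacing each factor by its crude size only yields the weaker $C\,s^{-d}t^{-d}$, which is insufficient. The hypothesis $s\leq t$ enters only through the choice to perform the $b$-summation first, selecting the sharper of the two symmetric bounds.
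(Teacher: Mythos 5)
Your proof is correct, and it reaches the paper's bound by a differently organized estimate of the same double sum. Both you and the paper expand $\E\big[\widetilde m_s(\theta)\,\widetilde m_t(\theta)\big]=\sum_{k\ge s}\sum_{\ell\ge t}u_k(\theta)\,u_\ell(\theta)\,r_X(t-s+k-\ell)$, insert $|u_n(\theta)|\le C\,n^{-1-d}$ together with \eqref{cov2}, and reduce the claim to bounding $\sum_{a,b\ge 0}(t+a)^{-1-d}(s+b)^{-1-d}(1+|a-b|)^{-(1-2d^*_+)}$. From there the routes diverge: the paper parametrizes by the lag $j=|a-b|$, writes the sum as $I_{1+d}(s,t)$ plus $\sum_j(1+j)^{-(1-2d^*_+)}\big(I_{1+d}(s+j,t)+I_{1+d}(s,t+j)\big)$, and then needs a further split of the $j$-sum at $j=t-s$ and a chain of applications of the $I$ and $J$ bounds of Lemma~\ref{lem1}; you instead sum iteratively, proving the intermediate bound $\Sigma(a)\le C\,s^{-d}(1+a)^{-(1-2d^*_+)}$ by your three-region split and finishing with a single application of $J_{1+d,1-2d^*_+}(0,t)$. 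Your organization avoids the case split at $j=t-s$, isolates in $\Sigma(a)$ a clean reusable estimate, and makes transparent both where the exponent $1-2d^*_++d$ is produced and where $s\le t$ enters (only in choosing to sum the $s$-variable first, which selects the sharper of the two symmetric bounds since $(s/t)^{1-2d^*_+}\le 1$). One small simplification is available in your middle region: the domination $(s+a)^{-1-d}(1+a)^{2d^*_+}\le C\,s^{-d}(1+a)^{2d^*_+-1}$ is equivalent to $(1+a)\,s^d\le C\,(s+a)^{1+d}$, which holds with $C=1$ because $s\ge 1$ gives $1+a\le s+a$ and $s^d\le (s+a)^d$, so the case distinction between $a\le s$ and $a>s$ is unnecessary.
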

\begin{proof}
Using the bounds of functions $I_{1+d}$ and $J_{1+d,1-2d}$ defined in Lemma \ref{lem1}, we obtain 
\begin{eqnarray}
\nonumber  \E \big [ \widetilde m_s(\theta)\, \widetilde m_t(\theta)\big ]   & =& \sum_{k=s}^\infty \sum_{\ell=t}^\infty u_k(\theta) \, u_\ell(\theta) \, r_X(t-s+k-\ell)  \\
\nonumber &  & \hspace{-3cm} \leq C \, \sum_{k=1}^\infty \sum_{\ell=1}^\infty \frac 1 {(s+k)^{1+d}} \, \frac 1 {(t+\ell)^{1+d}} \, \frac 1 {(1+|k-\ell|)^{1-2d^*_+}}\\
\nonumber &  & \hspace{-3cm} \leq C \, \Big ( \sum_{j=1}^\infty \frac 1 {(1+j)^{1-2d^*_+}} \, \sum_{\ell=1}^\infty \frac 1 {(\ell+s+j)^{1+d}(\ell+t)^{1+d}} + \sum_{k=1}^\infty \frac 1 {(k+s)^{1+d}(k+t)^{1+d}} \Big ) \\
\nonumber & & \hspace{1cm} + \sum_{j=1}^\infty \frac 1 {(1+j)^{1-2d^*_+}} \, \sum_{k=1}^\infty \frac 1 {(k+s)^{1+d}(k+t+j)^{1+d}}  \Big )\\
\nonumber & & \hspace{-3cm} \leq C \, \Big (I_{1+d}(s,t) +\sum_{j=1}^\infty \frac 1 {(1+j)^{1-2d^*_+}} \, \big ( I_{1+d}(s+j,t)+I_{1+d}(s,t+j) \big )\Big )\\
\nonumber & &\hspace{-3cm} \leq C \, \Big ( \frac 1 {s^d t^{1+d}}+\frac 1 {s^d} \, J_{1+d,1-2d^*_+}(0,t) + \frac 1 {t^{d+1}}\,   \sum_{j=1}^{t-s} \frac 1 {(1+j)^{1-2d^*_+}} \,  \frac 1 {(s+j)^{d}}  \\
\nonumber && \hspace{1cm} +\frac 1 {t^{d}}\,   \sum_{j=t-s}^{\infty} \frac 1 {(1+j)^{1-2d^*_+}} \,  \frac 1 {(s+j)^{1+d}}  \Big )\\ 
\nonumber & &\hspace{-3cm} \leq C \, \Big ( \frac 1 {s^d t^{1+d}}+\frac 1 {s^dt^{1-2d^*_++d}}  + \frac 1 {s^d t^{d+1}}\, (t-s+1)^{2d^*_+} +\frac 1 {t^{d}}\, J_{1+d,1-2d^*_+}(t-s,t) \Big )  \\ 
\nonumber & & \hspace{-3cm}  \leq C \, \Big (\frac 1 {s^d t^{1+d}}+\frac 1 {s^dt^{1-2d^*_++d}}  + \frac 1 {s^d t^{1-2d^*_++d}} +\frac 1 {t}\, J_{1+d,1-2d^*_+}(t-s,t) \Big ) \\
 \nonumber & & \hspace{-3cm}   \leq  \frac C {s^dt^{1-2d^*_++d}}.
\end{eqnarray}
\end{proof}
\begin{lem} \label{lem3}
Under the assumptions of Theorem \ref{Theo1}, there exists $C>0$ such as for any $\theta \in \Theta$ and any $1 \leq s$ and $1\leq t$,
\begin{equation}
\big | \E \big [ \widetilde m_s(\theta)\, m_t(\theta)\big ] \big | \leq \left \{ \begin{array}{ll} \displaystyle 
C \,\Big ( \frac 1  {s^dt^{1-2d^*_+}} +\frac {(1+t-s)^{2d^*_+}} {s^{1+2d}}\Big ) & \mbox{if $s\leq t$} \\ \displaystyle
C \, \Big ( \frac {t^{2d^*_+}} {s^{1+d} } +\frac {(1+s-t)^{2d^*_+} }{s^{1+d}t^d} \Big )  & \mbox{if $s\geq t$} 
\end{array} \right . .
\end{equation}
\end{lem}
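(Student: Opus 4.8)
The plan is to proceed exactly as in the proof of Lemma \ref{lem2}, expressing the cross-term directly through the autocovariance $r_X$ and then reducing the resulting double series to the elementary sums controlled by Lemma \ref{lem1}. Since $\widetilde m_s(\theta)=\sum_{k=s}^\infty u_k(\theta)\,X_{s-k}$ and $m_t(\theta)=\sum_{\ell=1}^\infty u_\ell(\theta)\,X_{t-\ell}$, stationarity gives $\E\big[\widetilde m_s(\theta)\,m_t(\theta)\big]=\sum_{k=s}^\infty\sum_{\ell=1}^\infty u_k(\theta)\,u_\ell(\theta)\,r_X(t-s+k-\ell)$. First I would insert the coefficient bound $|u_k(\theta)|\le C\,k^{-1-d}$ (which follows from \eqref{Diff}, equivalently from \eqref{untheta}) and the covariance bound $|r_X(m)|\le C\,(1+|m|)^{-(1-2d^*_+)}$ from \eqref{cov2}, so that the whole problem becomes the estimation of a positive double sum of the three monomial factors.

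Then I would reindex $k=s+k'$ with $k'\ge 0$, so that the covariance argument becomes $t+k'-\ell$, and split the $\ell$-summation according to the sign of $t+k'-\ell$, writing $j=|t+k'-\ell|$. This produces a near-diagonal contribution (where $\ell$ is comparable to $t+k'$) together with two tails, and in each region the inner summation over $\ell$ (or $k'$) is carried out using the estimates $I_{1+d}$ and $J_{1+d,1-2d^*_+}$ of Lemma \ref{lem1}, exactly the mechanism already used in Lemma \ref{lem2}. The essential difference from Lemma \ref{lem2} is that here the factor $m_t(\theta)$ is \emph{not} truncated at $\ell=t$: the sum over $\ell$ runs from $1$, so the relevant $J$-estimates and the effective ranges change, and this is what generates the extra terms $s^{-1-2d}(1+t-s)^{2d^*_+}$ and $s^{-1-d}t^{-d}(1+s-t)^{2d^*_+}$.

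The final step is the case distinction $s\le t$ versus $s\ge t$. In both regimes the power-law factors $(1+t-s)^{2d^*_+}$ and $(1+s-t)^{2d^*_+}$ arise from a partial sum of the form $\sum_{j=1}^{|t-s|}(1+j)^{-(1-2d^*_+)}\asymp |t-s|^{2d^*_+}$, coming from the range of $j$ lying between the two scales $s$ and $t$; I would isolate this partial sum explicitly and then retain only the dominant contributions, which yields the two stated bounds.

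The main obstacle I anticipate is the bookkeeping rather than any single hard inequality. Because $\widetilde m_s(\theta)$ is supported on indices $k\ge s$ while $m_t(\theta)$ is supported on all $\ell\ge 1$, the two factors are genuinely asymmetric, and when $s\ge t$ the covariance argument $t+k'-\ell$ can be large and positive for small $\ell$; keeping track of which scale dominates in each sub-region, and matching each inner sum to the correct variant of the $I$- and $J$-estimates of Lemma \ref{lem1} with the sharp exponents, is the delicate part. Verifying that every sub-sum is indeed dominated by one of the two displayed terms, and in particular that no term of larger order has been overlooked, is where the care is required.
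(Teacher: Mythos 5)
Your plan is correct and coincides with the paper's own proof of Lemma \ref{lem3}: the paper writes $\E\big[\widetilde m_s(\theta)\,m_t(\theta)\big]=\sum_{k\geq s}\sum_{\ell\geq 1}u_k(\theta)\,u_\ell(\theta)\,r_X(t-s+k-\ell)$, inserts $|u_k(\theta)|\leq C\,k^{-1-d}$ together with \eqref{cov2}, splits the double sum by the size and sign of the covariance argument exactly as you describe, and closes each region with the $I_{1+d}$ and $J_{1+d,1-2d^*_+}$ estimates of Lemma \ref{lem1}, treating $s\leq t$ and $s>t$ separately. The partial sums $\sum_{j\leq |t-s|}(1+j)^{2d^*_+-1}\asymp (1+|t-s|)^{2d^*_+}$ you isolate are indeed the source of the factors $(1+t-s)^{2d^*_+}$ and $(1+s-t)^{2d^*_+}$, so nothing beyond the bookkeeping you anticipate is required.
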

\begin{proof}
\begin{eqnarray}
\nonumber  \E \big [ \widetilde m_s(\theta)\, m_t(\theta)\big ]   & =& \sum_{k=s}^\infty \sum_{\ell=1}^\infty u_k(\theta) \, u_\ell(\theta) \, r_X(t-s+k-\ell)  \\
\nonumber & \leq & C \, \sum_{k=1}^\infty \sum_{\ell=1}^\infty \frac 1 {(s+k)^{1+d}} \, \frac 1 {\ell^{1+d}} \, \frac 1 {(1+|t+k-\ell|)^{1-2d^*_+}}\\
\nonumber & \leq & C \, \Big ( \sum_{j=1}^\infty \frac 1 {(1+t+j)^{1-2d^*_+}} \, \sum_{\ell=1}^\infty \frac 1 {(\ell+s+j)^{1+d}\ell^{1+d}}  \\
\nonumber & & \hspace{0.3cm} + \sum_{j=1}^t \frac 1 {(1+t-j)^{1-2d^*_+}} \, \sum_{k=1}^\infty \frac 1 {(k+s)^{1+d}(k+j)^{1+d}}  \\
\nonumber & & \hspace{1 cm} + \sum_{j=t}^\infty \frac 1 {(1+j-t)^{1-2d^*_+}} \, \sum_{k=1}^\infty \frac 1 {(k+s)^{1+d}(k+j)^{1+d}}  \Big )\\
\nonumber & \leq & C \, \Big ( \sum_{j=1}^\infty \frac 1 {(t+j)^{1-2d^*_+}} \,I_{1+d}(s+j,0) + \sum_{j=1}^t \frac 1 {j^{1-2d^*_+}} \,I_{1+d}(s,t-j) \\
&& \nonumber \hspace{6cm}   +
\sum_{j=1}^\infty \frac 1 {j^{1-2d^*_+}} \,I_{1+d}(s,j+t)\Big ).
\end{eqnarray}
Then, if $s \leq t$, 
\begin{eqnarray}
\nonumber  \big |\E \big [ \widetilde m_s(\theta)\, m_t(\theta)\big ]\big |  & \leq & C \, \Big ( J_{1+d,1-2d^*_+}(t,s) + \frac 1 {s^d} \, \sum_{j=1}^{t-s} \frac 1 {j^{1-2d^*_+}} \,\frac 1 {(t-j)^{1+d}} \\
\nonumber  && \hspace{0.3cm}  + \frac 1 {s^{1+d}} \, \sum_{j=1}^{s} \frac 1 {(t-j)^{1-2d^*_+}} \,\frac 1 {(s-j)^{1+d}}+\frac 1 {s^d} \, J_{1+d,1-2d^*_+}(0,t) \Big ) \\
\nonumber & \leq & C \, \Big ( \frac 1 {s^dt^{1-2d^*_+}} + \frac {1} {s^{2d}t^{1-2d^*_+}}+ \frac {1} {s^{1+2d-2d^*_+}}+\frac 1 {s^d t^{1+d-2d^*_+}}   \Big ) \\
\nonumber &\leq & C \Big ( \frac 1  {s^dt^{1-2d^*_+}}+ \frac {1} {s^{1+2d-2d^*_+}}\Big )  .
\end{eqnarray}
And if $s > t$, 
\begin{eqnarray}
\nonumber \big |\E \big [ \widetilde m_s(\theta)\, m_t(\theta)\big ]\big |  & \leq & C \, \Big ( J_{1+d,1-2d^*_+}(t,s) + \frac 1 {s^{1+d}}\, \sum_{j=1}^{t} \frac 1 {j^{1-2d^*_+}} \,\frac 1 {(t-j)^{1+d}} \\
\nonumber  && \hspace{0.3cm}  + \frac 1 {s^{1+d}} \, \sum_{j=1}^{s-t} \frac 1 {j^{1-2d^*_+}} \,\frac 1 {(t+j)^{d}}+\frac 1 {s^{d}} \, \sum_{j=s-t} ^\infty \frac 1 {j^{1-2d^*_+}} \,\frac 1 {(t+j)^{1+d}} \Big ) \\
\nonumber & \leq & C \, \Big ( \frac 1 {s^{1+d} t^{-2d^*_+}} + \frac 1 {s^{1+d}t^{1-2d^*_+}}+ \frac {1}{s^{1+2d-2d^*_+}}+\frac 1 {s^{1+2d-2d^*_+}}  \Big ) \\
\nonumber & \leq & C \, \Big ( \frac 1 {s^{1+d} t^{-2d^*_+}} +\frac 1 {s^{1+2d-2d^*_+}}  \Big ) .
\end{eqnarray}
\end{proof}

\paragraph*{Aknowledgments} The authors are grateful to the referees  for many relevant suggestions and comments that helped to notably improve and enrich the contents of the paper.


\begin{thebibliography}{}


\bibitem[Adenstedt, 1974]{A}
Adenstedt, R. (1974). 
\newblock  {\em On large-sample estimation for the mean of a stationary random sequence}. \newblock  Annals of Mathematical Statistics {\bf 2}, 1095-107.


\bibitem[Baillie {\it et al.}, 1996]{BBM}
Baillie, R., Bollerslev T. and Mikkelsen H.O.  (1996), 
\newblock  {\em Fractionally integrated generalized autoregressive conditional heteroskedasticity}.
\newblock  Journal of Econometrics {\bf 74}, 3-30.

\bibitem[Bardet, 2023]{B}
Bardet J.-M. (2023)
\newblock {\em A new estimator for LARCH processes}.
\newblock Journal of Time Series Analysis {\bf 45}, 103-132.


\bibitem[Bardet and Tudor, 2014]{BT}
Bardet J.-M., Tudor, C. (2014)
\newblock  {\em  Asymptotic behavior of the Whittle estimator for the increments of a Rosenblatt process}.
\newblock Journal of Multivariate Analysis {\bf 131}, 1-16.

\bibitem[Bardet and Wintenberger, 2009]{BW}
Bardet J.-M., Wintenberger O.  (2009)
\newblock {\em Asymptotic normality of the quasi-maximum likelihood estimator for
  multidimensional causal processes},
\newblock Ann. Statist.{\bf 37}, 2730-2759.

\bibitem[Beran, 1993]{Be93}
Beran, J. (1993). 
\newblock {\em Recent Developments in Location Estimation and Regression for Long-Memory Processes}. 
In: Brillinger, D. {\it et al.} (eds) New Directions in Time Series Analysis, vol 46. Springer, NY.

\bibitem[Beran, 1994]{Be}Beran, J.  \emph{Statistics for Long-Memory Processes}.
Chapman and Hall (1994).

\bibitem[Beran \& Sch\"utzner, 2009]{Beran}
Beran J., Sch\"utzner M. (2009)
\newblock {\em On approximate pseudo-maximum likelihood estimation for
  LARCH-processes},
\newblock Bernoulli, {\bf 15}, 1057-1081.

\bibitem[Berkes and Horv\'ath (2004)]{BH}Berkes, I., Horv\'ath, L. (2004)
 {\it The efficiency of the estimators of the parameters in GARCH processes}, Ann. Statist.,  {\bf 32}, 633-655. 


\bibitem[Billingsley, 1968]{Billingsley1968}
Billingsley P. (1968)
\newblock {\em Convergence of Probability Measures}.
\newblock New York: John Wiley \& Sons Inc.

\bibitem[Billingsley, 1995]{Billingsley1995}
Billingsley P.  (1995)
\newblock {\em Probability and Measure}.
\newblock New York: John Wiley \& Sons Inc.


\bibitem[Bingham {\it et al.}, 1987] {BGT}
Bingham, N.H., Goldie, C.M. and Teugels, J.L. (1987)
\newblock  {\em Regular variation},
\newblock {Encycl. Math. Appl., Cambridge University Press}

\bibitem[Boubacar Maïnassara {\it et al.}, 2021] {BES}
Boubacar Ma\"inassara, Y., Esstafa, Y. and Saussereau, B.  (2021)
\newblock {\em Estimating FARIMA models with uncorrelated but non-independent error terms},
\newblock Statistical Inference for Stochastic Processes, {\bf 24}, 549-608.

\bibitem[Boubacar Maïnassara {\it et al.}, 2021] {BES23}
Boubacar Ma\"inassara, Y., Esstafa, Y. and Saussereau, B.  (2023)
\newblock {\em Diagnostic checking in FARIMA models with uncorrelated but non-independent error terms},
Electronic Journal of Statistics, {\bf 17}, 1160-1239.

\bibitem[Dahlhaus (1989)]{Da} Dahlhaus, R. (1989)
{\em Efficient parameter estimation
for self-similar processes},  Ann. Statist., {\bf 17} ,  1749-1766.

\bibitem[Doukhan {\it et al.}, 2016] {DGS}
Doukhan, P., Grublyt\'e, I. and Surgailis, D. (2016)
\newblock {\em A nonlinear model for long-memory conditional heteroscedasticity},
\newblock Lithuanian Mathematical Journal, {\bf 56}, 164-188.


\bibitem[Doukhan {\it et al.}, 2003]{DOT}
Doukhan, P., Oppenheim, G., Taqqu, M.S. (2003)
\newblock{{\em Theory and applications of long-range
dependence}, Birkh\"auser.}


\bibitem[Francq and Zakoian (2004)]{FZ}
Francq, C., Zakoian, J.-M. (2004)
{\it Maximum likelihood estimation of pure GARCH and ARMA-GARCH processes}, Bernoulli, {\bf 10}, 605-637.



\bibitem{FT} Fox, R., Taqqu, M.S. (1986) {\em  Large-sample properties
of parameter estimates for strongly dependent Gaussian time
series} , Ann. Statist. {\bf 14}, 517-532.


\bibitem[Giraitis and Surgailis (1990)]{gi-sur} Giraitis, L., Surgailis, D.  (1990) {\em  A central limit theorem for quadratic forms in strongly dependent linear variables and its applications to the asymptotic normality of Whittle estimate},  Prob. Th. and Rel. Field. {\bf 86}, 87-104.

\bibitem[Giraitis and Taqqu, 1999]{GT} Giraitis, L., Taqqu, M.S. (1999) {\em  Whittle estimator for finite-variance non-Gaussian time series with long memory},
  Ann. Statist. {\bf 27}, 178-203.

\bibitem[Kounias \& Weng, 1969]{KW}
Kounias E., Weng T.-S. (1969)
\newblock {\em An inequality and almost sure convergence},
\newblock Annals of Mathematical Statistics, {\bf 48}, 1091-1093.


\bibitem[Parzen, 1999]{Parzen}
Parzen E. (1999)
\newblock {\em Stochastic processes} 
\newblock Classics in Applied Mathematics, SIAM.

\bibitem[Samarov and Taqqu, 1988]{ST}
Samarov A., Taqqu M.S. (1988)
\newblock {\em On the efficiency of the sample mean in long-memory noise}.  
\newblock  J. Time Series Analysis {\bf 9}, 191-200.

\bibitem[Straumann, 2005]{Stra05}
Straumann D. (2005)
\newblock {\em Estimation in conditionally heteroscedastic time series models},
  volume 181 of {\em Lecture Notes in Statistics} 
\newblock Springer-Verlag, Berlin.

\bibitem[Wu {\it et al.}, 2010]{WYW}
Wu, W.B., Yinxiao, H. and Wei, Z.  (2010)
\newblock {\em Covariance estimation for long-memory processes},
\newblock Advances in Applied Probability, {\bf 42}, 137-157.

\end{thebibliography}
\end{document}